\newcommand{\fr}[1]{\mathfrak{#1}}
\newcommand{\ca}[1]{\mathcal{#1}}
\newcommand{\bk}{{{\boldsymbol{k}}}}
\newcommand{\bl}{{{\boldsymbol{l}}}}
\newcommand{\isom}{\simeq}
\newcommand{\ec}{\prec_{\exists}}
\newcommand{\Z}{\mathbb{Z}}
\newcommand{\cha}{\operatorname{char}}
\newcommand{\gi}{\Gamma_{\infty}}
\newtheorem{theorem}{Theorem}[section]
\newtheorem{lemma}[theorem]{Lemma}
\newtheorem{proposition}[theorem]{Proposition}
\newtheorem{definition}[theorem]{Definition}
\newtheorem{remark}[theorem]{Remark}
\theoremstyle{definition}
\title{Characterization of Extremal Valued Fields}
\author{Salih Azgin}
\address{Department of Mathematics and Statistics,
McMaster University, 1280 Main Street West,
Hamilton, L8S 4K1 Ontario, Canada}
\email{sazgin@math.mcmaster.ca}
\author{Franz-Viktor Kuhlmann}
\address{Department of Mathematics and Statistics,
University of Saskatchewan,
106 Wiggins Road,
Saskatoon, Saskatchewan, Canada S7N 5E6}
\email{fvk@math.usask.ca}
\author{Florian Pop}
\address{Department of Mathematics,
University of Pennsylvania,
209 South 33rd Street,
Philadelphia, PA 19104-6395, USA}
\email{pop@math.upenn.edu}
\begin{document}

\begin{abstract}
We characterize those valued fields for which the image of
the valuation ring under every polynomial in several variables contains
an element of maximal value, or zero.
\end{abstract}

\date{7.~12.~2009}

\thanks{2000 Mathematics Subject Classification: primary: 12J10;
secondary: 12E30.\\ 
A major part of this research was done while the authors were
attending the o-minimality program at the Fields Institute, Jan.-June
2009. The authors would like to thank the Fields Institute for its
support and hospitality. The second author was partially supported by a
Canadian NSERC grant and by a sabbatical grant of the University of
Saskatchewan.}

\maketitle

\begin{section}{Introduction}
The notion of {\em extremality} for valued fields was introduced by
Yuri Ershov in~\cite{ershovext} in connection with valued skew fields
which are finite-dimensional over their center. It turns out that the
original definition given in that paper (and also in talks given by its
author) is flawed in the sense that there are no extremal valued fields
except algebraically closed valued fields, and Proposition 2 of that
paper is false. We fix this flaw by slightly modifying the definition of
extremality; see Definition~\ref{ext} below.

The notion of extremality, restricted to certain classes of polynomials,
has since become very useful for the characterization of various
properties of valued fields, cf.~\cite{kuhlmann}.

In valuation theory and particularly the model theory of valued fields,
power series fields and, more generally, maximal fields (valued fields
without proper immediate extensions) are usually known to have very good
properties. For instance, all of them are henselian, and what is more,
algebraically complete. So it seemed likely that all of them are also
extremal. Our results in this paper will show that this is not the case.

In the present paper, we obtain the following characterization of
extremal valued fields with residue characteristic $0$:

\begin{theorem}                                   \label{char0}
Extremal valued fields with residue characteristic $0$ are precisely
 \begin{itemize}
  \item[(i)] henselian valued fields whose value group is a
             $\mathbb{Z}$-group, and
  \item[(ii)] henselian valued fields whose value group is divisible
              and residue field is large.
 \end{itemize}
\end{theorem}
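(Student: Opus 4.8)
The plan is to prove both inclusions, and the first move I would make is to observe that \emph{extremality is an elementary property}. Indeed, for fixed degree $d$ and number of variables $n$, the assertion ``every $f\in\mathcal O_K[X_1,\dots,X_n]$ of degree $\le d$ either has a zero in $\mathcal O_K^n$ or attains $\sup_a v(f(a))$'' is expressed by the first-order scheme $\forall\,(\text{coeffs})\ \big[\exists a\, f(a)=0\ \vee\ \exists a\,\forall b\ v(f(a))\ge v(f(b))\big]$, quantifying only over the finitely many coefficients; extremality is the conjunction of these sentences over all $d,n$. Combined with the Ax--Kochen--Ershov principle in residue characteristic $0$, this lets me reduce extremality of a henselian $K$ to a condition on $\mathrm{Th}(\Gamma)$ and $\mathrm{Th}(k)$ alone, and in particular to replace $K$ by a convenient representative with the same value group and residue field (a power series or Hahn field). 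Since being a $\Z$-group (Presburger) and being large are themselves elementary, this reduction is consistent with the shape of the answer and will be my main structural tool.

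For the necessity direction I would first show \emph{extremal $\Rightarrow$ henselian}. If $K$ is not henselian, pick $g\in\mathcal O_K[X]$ and $x_0\in\mathcal O_K$ with $\bar g(\bar x_0)=0$, $\bar g'(\bar x_0)\ne 0$, but no root of $g$ in $K$; the Newton iterates stay in $K$ and give $v(g(x_i))\to\infty$ with no actual zero, so $\sup_x v(g(x))=\infty$ is not attained and $K$ is not extremal. Next I would pin down the value group: working over the power-series model, I would construct, for each $\Gamma$ that is neither a $\Z$-group nor divisible, an explicit polynomial (using elements whose values witness a bad coset of $n\Gamma$, so that monomials cannot cancel) whose value set is cofinal below a supremum it never meets. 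The point of the AKE reduction here is that I may fix the residue field to be as large as I like, so this step isolates a purely value-group obstruction and forces $\Gamma$ to be a $\Z$-group or divisible.

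The heart of the matter is the divisible case, where I would prove \emph{$K$ extremal $\iff k$ large}, via the characterization of largeness as $k\ec k((t))$. For the forward implication, a non-large $k$ supplies an existential formula realized in $k((t))$ but not in $k$; since $\Gamma$ is divisible I can rescale variables freely, translate this into a polynomial over $K$ whose residue-level value-maximization demands exactly the missing $k$-point, and conclude the supremum is approached but not attained. For the converse I would take $f\in\mathcal O_K[\bar X]$, let $\delta=\sup_a v(f(a))$, rescale using divisibility so that the obstruction to increasing the value past a given level becomes a system of polynomial equations over $k$, and use largeness (enough rational points, after cutting down to a curve by a generic projection) together with Hensel's lemma to realize the optimum or produce a zero. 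I expect the main obstacle to be precisely this translation in several variables: reducing the attainment of $\sup_a v(f(a))$ to the existence of a rational point on an associated $k$-variety, and controlling the passage from several variables to curves so that largeness is genuinely applicable.

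Finally, for the sufficiency of the $\Z$-group case I would argue separately, since there largeness of $k$ is unavailable and a different mechanism is needed. Here I would exploit discreteness and the finiteness of $[\Gamma:n\Gamma]=n$: the least positive element prevents values from accumulating at an element of $\Gamma$ from below, and the finite index lets me reduce the maximization of $v(f)$ to finitely many residue-level configurations resolved by henselianity alone, without any point-counting on $k$. The delicate subpoint I would have to rule out is that the value set be cofinal below a \emph{gap} of the (possibly non-archimedean) $\Z$-group; handling this, and dovetailing it with the divisible case to cover all remaining pairs $(\Gamma,k)$, is where I would expect to spend the most care once the divisible-case equivalence is in place.
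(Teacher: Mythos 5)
Your overall frame---extremality is a first-order property, and in residue characteristic $0$ the Ax--Kochen--Ershov principle reduces it to a condition on $\mathrm{Th}(\Gamma)$ and $\mathrm{Th}(\bk)$---is exactly the paper's: this is how Theorem~\ref{zge} is deduced from Ershov's Theorem~\ref{zgroup}, and how the proof of Theorem~\ref{divisible} reduces to $\Gamma=\mathbb{R}$. The genuine gap is in the sufficiency of case (ii), and you partly flag it yourself. Largeness cannot enter as ``enough rational points on curves plus Hensel's lemma.'' The paper's proof of Theorem~\ref{divisible} passes to a $|K|^+$-saturated elementary extension $\ca{K}^*$, coarsens $v^*$ there (differently in the two cases: value set cofinal in $\Gamma$, or bounded with supremum $r\in\mathbb{R}$), and then invokes Theorem~\ref{lec}: a large \emph{perfect} field which is the residue field of a valuation trivial on it is \emph{existentially closed} in the overfield. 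That theorem is applied to $\bk^*\subseteq K^*w$ (after embedding $\bk^*$ via Lemma~\ref{emb})---a field of potentially enormous transcendence degree over $\bk$, not a curve---and is combined with the existential-closedness form of AKE (Theorem~\ref{AKE}) to pull the maximum, or an honest zero in the cofinal case, back into $\ca{O}_v$. Your iterative scheme (``rescale, reduce the obstruction to equations over $k$, improve'') has no termination mechanism: the values can creep up to an unattained supremum without any single polynomial system over $\bk$ detecting attainment, and when the value set is cofinal you must produce an actual zero of $F$ in $\ca{O}_v^n$, which Hensel lifting of smooth curve points cannot provide. Some saturation/compactness transfer is unavoidable here; it is the mathematical heart of the theorem and is absent from your sketch.

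Two further steps are flawed or under-specified. First, your Newton-iteration proof of ``extremal $\Rightarrow$ henselian'' fails twice: the excess $v(g(x_i))-2v(g'(x_i))$ only doubles at each step, so if it lies in a proper nonzero convex subgroup of $\Gamma$ the values $v(g(x_i))$ stay bounded and do not tend to $\infty$; worse, failure of Hensel's lemma at one residue root does not prevent $g$ from having a zero in $\ca{O}_v$ above a \emph{different} residue root, in which case $\infty$ is attained and $g$ witnesses nothing. The correct route (Proposition~\ref{eam}) is extremal $\Rightarrow$ algebraically maximal $\Rightarrow$ henselian, via the theory of pseudo-Cauchy sequences and immediate extensions. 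Second, for the value-group dichotomy, ``a bad coset of $n\Gamma$'' cannot be the whole input: $\Z$-groups also have non-divisible elements, yet those fields are extremal. The paper needs the regularity theory of Robinson--Zakon and Conrad (Theorems~\ref{reg1} and~\ref{reg2}) to split into the dense regular case (Proposition~\ref{exreg}) and the case of a non-divisible quotient $\Gamma/\Delta$ modulo a nonzero convex subgroup (Proposition~\ref{exnreg}); in both, cofinality of the value set below the unattained bound is extracted from density, respectively from small positive elements of $\Delta$, using the specific polynomial $x^{4n}+\epsilon(xy-\epsilon^2)^n+\epsilon^2y^{4n}$. Likewise, your ``translate non-largeness into a polynomial'' step is precisely the nontrivial construction of Proposition~\ref{exnlar} (the polynomial $G\big(X,f(Y,Z),XY(Y-a_1)\cdots(Y-a_m)-\epsilon\big)$ built from a curve with finitely many $\bk$-points); without something of that sort, the necessity of largeness is asserted rather than proved.
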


More generally, we prove in Section~\ref{sectchar}:

\begin{theorem}                                       \label{gen}
Let $\ca{K}=(K,\Gamma,\bk;v)$ be a valued field.
If $\ca{K}$ is extremal, then $\ca{K}$ is algebraically complete and
\begin{itemize}
  \item[(i)] $\Gamma$ is a $\mathbb{Z}$-group, or
  \item[(ii)] $\Gamma$ is divisible and $\bk$ is large.
 \end{itemize}
If $K$ is perfect with $\cha{\bk}=\cha{K}$, then also the
converse holds.
\end{theorem}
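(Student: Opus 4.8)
The plan is to prove the two implications of the theorem separately. For the forward direction I would first read algebraic completeness off the one-variable instances of extremality. Recall that $\ca{K}$ is algebraically maximal precisely when, for every $f\in K[X]$, the set $\{v(f(a)):a\in K\}$ has a greatest element in $\Gamma\cup\{\infty\}$: otherwise a pseudo-Cauchy sequence of algebraic type approaching the unattained supremum has no limit in $K$ and generates a proper immediate algebraic extension. Given $f$, I would normalize the relevant ball into the valuation ring $\mathcal{O}$ by an affine substitution and apply extremality to the transformed polynomial over $\mathcal{O}$; the attained maximum, or a root, then produces the maximum over $K$. Henselianity follows from the same optimization: lifting a simple residue root $\bar a$ of some $f\in\mathcal{O}[X]$ to $a_{0}\in\mathcal{O}$, Newton iteration forces $v(f(a_{n}))$ to increase without bound, so by extremality the supremum can only be reached as $\infty$, that is, at a genuine root. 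Hence $\ca{K}$ is henselian and algebraically maximal, i.e. algebraically complete.

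For the structural dichotomy it suffices to establish (a) that if $\Gamma$ is not a $\mathbb{Z}$-group then $\Gamma$ is divisible, and (b) that if $\Gamma$ is divisible then $\bk$ is large; combining these, a non-$\mathbb{Z}$-group value group lands in case (ii), while a $\mathbb{Z}$-group is case (i). For (a) I would use polynomials whose value over $\mathcal{O}$ detects membership in $n\Gamma$, arranging that any $\Gamma$ which is neither discrete nor $n$-divisible supplies a value function whose supremum over $\mathcal{O}$ is approached along a coset of $n\Gamma$ but never attained; the surviving possibilities are exactly the discrete groups satisfying $[\Gamma:n\Gamma]=n$ for all $n$, that is, the $\mathbb{Z}$-groups, so a non-$\mathbb{Z}$-group must be divisible. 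For (b) I would invoke Pop's criterion that $\bk$ is large if and only if $\bk\ec\bk((t))$, and argue contrapositively: a witness to the failure of $\bk\ec\bk((t))$ is an existential condition solvable in $\bk((t))$ but not in $\bk$; lifting its coefficients to $\mathcal{O}$ and using the divisibility of $\Gamma$ to realize the successive $t$-adic orders as genuine values, I would build a polynomial whose values over $\mathcal{O}$ climb toward but never reach their supremum and whose image omits $0$, since any maximizer would reduce to a solution over $\bk$---contradicting extremality.

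For the converse, assume $\ca{K}$ algebraically complete, $K$ perfect, $\cha{\bk}=\cha{K}$, and that (i) or (ii) holds. The perfect equal-characteristic hypothesis furnishes a coefficient field and a cross-section of the value group, so that each $f(\bar a)$ can be analysed through its leading-term data. Given $f\in K[X_{1},\dots,X_{n}]$, I would use the relative completeness of algebraically complete fields over the pair $(\Gamma,\bk)$, an Ax--Kochen--Ershov-type principle, to reduce the statement that $\sup v(f(\bar a))$ is attained or $0\in f(\mathcal{O}^{n})$ to a combined condition on $\Gamma$ and $\bk$. In case (i) the discreteness of $\Gamma$ together with henselianity lets the value of $f$ be improved in controlled steps whose residue-level adjustments are affine, hence solvable over an arbitrary $\bk$, and the process terminates at a maximum or a root. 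In case (ii) divisibility removes every value-theoretic obstruction to improvement, and largeness of $\bk$ supplies the residue solutions---possibly smooth points of positive-dimensional residue varieties---required to realize the optimum or a zero.

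The main obstacle is the converse, and within it the clean reduction of the multivariate optimization to the pair $(\Gamma,\bk)$: one must invoke largeness in exactly the form that lifts residue solutions at the critical configuration, while controlling the interaction of several variables, which can shift the optimal value and the governing residue equation simultaneously. In the forward direction the delicate point is (a), namely excluding ordered groups that are neither discrete nor divisible; the polynomials witnessing non-extremality must be chosen to see the failure of divisibility at precisely the right level, so that the supremum of their values is genuinely approached without being attained.
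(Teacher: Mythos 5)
Your overall architecture (forward direction: algebraic completeness plus a value-group/residue-field dichotomy; converse: an Ax--Kochen--Ershov-type transfer) matches the paper's, but two steps in your forward direction have genuine gaps. First, algebraic completeness: your one-variable argument (plus Newton iteration for henselianity) yields only that $\ca{K}$ is algebraically \emph{maximal}, which is the paper's Proposition~\ref{eam}. The theorem claims more, namely that every finite extension is defectless, and in residue characteristic $p>0$ this is strictly stronger: algebraically maximal fields need not be defectless, so ``algebraically maximal, i.e.\ algebraically complete'' is false in general. This is precisely why the paper, after Proposition~\ref{eam}, proves a separate lemma --- every finite \emph{defectless} extension of an extremal field is again extremal, via a valuation basis and the norm form $N_{L|K}$ --- and then invokes Corollary~2.10 of \cite{kuhlmann}, which says that a valued field all of whose finite defectless extensions are algebraically maximal is algebraically complete. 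Your proposal contains no substitute for this step.

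Second, the dichotomy. Your plan for (a) excludes only value groups that are \emph{dense} and fail to be $n$-divisible, and then asserts the survivors are exactly the $\Z$-groups. That is wrong: a discrete ordered group need not be a $\Z$-group. Take $\Gamma=\Z\times\Z$ with the lexicographic order; it is discrete (smallest positive element $(0,1)$), not divisible, and not a $\Z$-group (e.g.\ $(\Gamma:2\Gamma)=4$). Its failure of divisibility is invisible to any ``approach the supremum along a coset of $n\Gamma$'' argument, because it lives only in the quotient $\Gamma/\Delta\isom\Z$ by the nonzero convex subgroup $\Delta=\{0\}\times\Z$. The paper closes exactly this hole with Proposition~\ref{exnreg}: for every nonzero convex subgroup $\Delta$ the quotient $\Gamma/\Delta$ is divisible, proved by a different mechanism (any attained value $\theta$ of the test polynomial is improved by increments $4n\delta$ with positive $\delta\in\Delta$, so no maximum exists); then Conrad's theorem (Theorem~\ref{reg2}) gives regularity, Robinson--Zakon (Theorem~\ref{reg1}) gives ``$\Z$-group or dense'', and only then does the cofinality argument of Proposition~\ref{exreg} --- which itself \emph{needs} regularity to produce divisible elements cofinal in the target interval, something your sketch tacitly assumes --- finish the dense case. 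Briefly, the remaining parts are also thinner than they need to be: in (b) the paper's Proposition~\ref{exnlar} requires solvability of $f(\epsilon,Z)=0$ for \emph{all} $\epsilon\in\fr{m}_v$ (coming from the smooth point via Hensel's Lemma), a uniformity that lifting a single existential witness from $\bk((t))$ does not give; and in the converse, ``the process terminates'' is not literally valid over a nonarchimedean $\Z$-group --- the paper instead uses that extremality is first order and transfers Ershov's Theorem~\ref{zgroup} through Theorem~\ref{AKE}, while case (ii) needs a saturated elementary extension, coarsenings, and Theorem~\ref{lec} to convert largeness into $\bk\ec K^*w$.
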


It remains an open question whether the above characterization also
holds in the case of mixed characteristic. In the case of non-perfect
valued fields, it does not hold. While for every field $k$, the formal
Laurent Series Field $k((t))$ with its canonical valuation $v_t$ is
extremal according to Theorem~\ref{zgroup} below, we will show in
Section~\ref{sectchar}:

\begin{theorem}                             \label{counter}
There exist non-extremal algebraically complete valued fields of equal
positive characteristic with value group a $\Z$-group, and such that
under a coarsening of their valuation, they are still not extremal, have
divisible value group and non-perfect large residue field.
\end{theorem}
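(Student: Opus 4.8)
The plan is to produce the counterexamples as composite-valued fields of the form $K\subseteq L((s^{\mathbb{Q}}))$, where $L=\kappa((t))$ for a field $\kappa$ of characteristic $p>0$ and $K$ is a suitable \emph{algebraically complete but non-maximal} subfield of the Hahn field with the same value group $\mathbb{Q}$ and residue field $L$. The fine valuation $v$ is the composite of the $s$-adic (Hahn) valuation $w$ with the $t$-adic valuation on $L$; its value group is $\Gamma=\mathbb{Q}\,\overrightarrow{\times}\,\mathbb{Z}$ ordered lexicographically with $\mathbb{Z}$ as its convex bottom, which is a $\mathbb{Z}$-group, and its residue field is $\kappa$. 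Coarsening $v$ by the convex subgroup $\mathbb{Z}$ returns $w$, whose value group $\mathbb{Q}$ is divisible and whose residue field $L=\kappa((t))$ is henselian valued, hence large, and non-perfect because $t\notin L^{p}$. Once $K$ is built and shown to be algebraically complete, all the side conditions in the statement are immediate; the entire difficulty is to certify \emph{non}-extremality for both $w$ and $v$.

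To witness non-extremality of $w$ I would use the polynomial $f(X,Y)=X^{p}-tY^{p}-c$ for a carefully chosen $c\in\mathcal{O}_{w}$. The structural fact I would first establish, by a direct Frobenius computation in the Hahn field, is that the additive image $S:=\{a^{p}-tb^{p}:a,b\in\mathcal{O}\}$ consists exactly of those series with support in $\mathbb{Q}_{\ge 0}$ all of whose coefficients lie in the proper $L^{p}$-subspace $V:=L^{p}+tL^{p}\subsetneq L$ (properness uses $t\notin L^{p}$ together with $[L:L^{p}]\ge 3$, which holds for $p\ge 3$, or for $p=2$ after also taking $\kappa$ non-perfect). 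Consequently, over the full Hahn field the quantity $\sup_{a,b}w\big(c-(a^{p}-tb^{p})\big)$ equals $\gamma_{0}:=\min\{\gamma:c_{\gamma}\notin V\}$ and is \emph{attained}, namely by the truncation $\sigma_{c}:=\sum_{\gamma<\gamma_{0}}c_{\gamma}s^{\gamma}\in S$. I would then choose $c\in K$ so that $c_{\gamma_{0}}\notin V$ (forcing $\gamma_{0}<\infty$, hence $c\notin S$ and $0\notin f(\mathcal{O}^{2})$), while arranging that $\sigma_{c}$ lies in the Hahn hull but not in $K$ and, crucially, that no element of $S\cap K$ matches $c$ below $\gamma_{0}$. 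Then the truncations of $c$ below each $\gamma<\gamma_{0}$ are realized inside $S\cap K$, so $w(f(\mathcal{O}_{K}^{2}))$ contains values increasing to $\gamma_{0}$, yet $\gamma_{0}$ is never attained, exhibiting $w$ as non-extremal.

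The device that makes this possible is Kaplansky's theory of pseudo-Cauchy sequences: an algebraically complete field (henselian with no proper immediate algebraic extension) may still omit the limit of a pseudo-Cauchy sequence of \emph{transcendental type}. I would therefore take $(c_{\gamma})_{\gamma<\gamma_{0}}$ to be the coefficients of such an omitted pseudo-limit sitting inside $V$, build $K$ as an algebraically maximal—but not maximal—hull of the field generated by the approximants and by $c$, and verify that the pseudo-Cauchy sequence governing $\sigma_{c}$ is of transcendental type, so that the barrier $\gamma_{0}$ is approached but not crossed within $K$ while $K$ stays algebraically complete with the prescribed value group and residue field. The fine valuation $v$ is then handled by the same polynomial: since $\Gamma$ is a $\mathbb{Z}$-group it is not $p$-divisible, so $t$ (now of value $(0,1)$) again fails to be a $p$-th power, the values $v(a^{p})$ and $v(tb^{p})$ lie in distinct cosets of $p\mathbb{Z}$ in the bottom component and hence never cancel, and the non-attained supremum reappears in the divisible top component $\mathbb{Q}$ of $\Gamma$. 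I expect the main obstacle to be precisely this balancing act: keeping $c$ and all approximants in $K$, certifying that $K$ is algebraically complete, and guaranteeing that $\gamma_{0}$ is a genuine strict supremum over $K$-points rather than being recovered by some other element of $S\cap K$. Checking that the relevant pseudo-Cauchy sequence is of transcendental type—so that algebraic completeness does not drag the limit back into $K$—is the crux on which the whole construction turns.
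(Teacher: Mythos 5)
There is a genuine gap, and it is fatal to the whole strategy rather than a repairable detail: the witness polynomial $f(X,Y)=X^{p}-tY^{p}-c$ can \emph{never} detect non-extremality of an algebraically complete field, so no amount of care in building $K$ can make your plan work. The point is that $X^{p}-tY^{p}$ is an additive polynomial with no separable part, so the approximation problem it poses is a purely inseparable one, and purely inseparable defect is exactly what algebraic completeness forbids. Concretely, in characteristic $p$ one has $a^{p}-tb^{p}-c=\pm\bigl(c^{1/p}-a+t^{1/p}b'\bigr)^{p}$ (with $b'=\pm b$), so $w\bigl(f(a,b)\bigr)=p\,w'\bigl(c^{1/p}-u\bigr)$ where $u$ ranges over the $K$-subspace $W=K+t^{1/p}K$ of the finite purely inseparable extension $M=K(t^{1/p},c^{1/p})$ and $w'$ is the unique prolongation of $w$. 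Now suppose $K$ is algebraically complete and, as in your setting, the residue $\bar t=t$ is not a $p$-th power in $Kw=L$ (for the fine valuation $v$, use instead that $v(t)\notin p\,vK$). Then: (1) $M|K$ and $M|K(t^{1/p})$ are defectless; (2) $1,t^{1/p},\dots,t^{(p-1)/p}$ is a valuation basis of $K(t^{1/p})|K$, so the distance from any element of $K(t^{1/p})$ to $W$ is attained (drop the coordinates outside $W$); (3) a finite defectless extension of a henselian field admits a valuation basis containing $1$, so the distance from $c^{1/p}$ to $K(t^{1/p})$ is attained, say with value $\delta_{1}$ at $u_{0}$. Since $w'(c^{1/p}-u)=\min\bigl(\delta_{1},\,w'(u_{0}-u)\bigr)$ for $u\in W$, the distance from $c^{1/p}$ to $W$ equals $\min(\delta_{1},\mu)$ with $\mu$ the (attained) distance from $u_{0}$ to $W$, and is attained; one checks (no cancellation, since $\bar t\notin L^{p}$) that the optimal $(a,b)$ lies in $\mathcal{O}_{w}^{2}$ when $c\in\mathcal{O}_{w}$. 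So $\{w(f(a,b)):a,b\in\mathcal{O}_{w}\}$ always has a maximum over an algebraically complete $K$. In Kaplansky language, which you invoke: the pseudo-Cauchy sequence of best approximations here has the \emph{algebraic} pseudo-limit $c^{1/p}$ and is of algebraic type, so non-attainment would yield an immediate purely inseparable extension, contradicting algebraic maximality. Your hope of exploiting transcendental-type sequences is structurally unavailable for this polynomial.

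This explains why the paper's proof does not, and cannot, use such a polynomial. It instead imports the construction of \cite{kuhlmann4} and uses $G(X_{0},X_{1})=X_{0}^{p}-X_{0}+tX_{1}^{p}-x$, whose additive part contains the \emph{separable} Artin--Schreier summand $X_{0}^{p}-X_{0}$; the crucial property (h) there is that $w(G(x_{0},x_{1}))\geq 0$ would force $x_{1}$ to be \emph{transcendental} over $L$, so the obstruction to attaining the supremum is transcendental and therefore compatible with algebraic completeness, while explicit approximants $a_{k}=\sum_{j\leq k}\xi_{j}$, $b_{k}=\sum_{j<k}s^{-1/q_{j}}$ make the value set cofinal in $wL^{<0}$. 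A separable part is indispensable. Two further problems in your write-up would remain even if the polynomial were replaced: you propose to take an ``algebraically maximal hull,'' but algebraically maximal is strictly weaker than algebraically complete (the residue field $\kappa((t))$ is not perfect, hence not a Kaplansky field, so the two notions do not coincide), and the theorem requires the latter; and the actual construction certifying completeness together with the omission property---the entire content of \cite{kuhlmann4}---is left as a declaration of intent rather than carried out.
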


\medskip
See the following section for the definitions of {\em algebraically
complete valued field}, {\em $\mathbb{Z}$-group} and {\em large field}.

\medskip
The authors would like to thank Sergei Starchenko for poining out the
flaw in the definition of extremality and for providing the first
example given in Remark~\ref{ershovw} below.
%

\end{section}

\begin{section}{Preliminaries}

We assume familiarity with the basic concepts of valued fields
and their model theoretic properties. We consider valued fields as
three-sorted structures
$$\ca{K}=(K,\Gamma,\bk; v)$$
where $K$ is the underlying field, $\Gamma$ is the value group,
$\bk$ is the residue field,
$v: K^\times \to \Gamma$ is the valuation,
with valuation ring
$$\ca{O}_v:=\{a \in K:\ v(a) \geq 0\}$$ and maximal ideal
$\fr{m}_v:=\{a \in K: v(a)>0\}$ of $\ca{O}$. We have the
residue class map
\begin{align*}
     \bar{}&:\ca{O}_v \to \bk  \\
           &a \mapsto a + \fr{m}_v
  \end{align*}
and the {\em residue characteristic of $K$} is the characteristic of $\bk$.
We often refer to $K$ as the valued field, instead of $\ca{K}$. When there are
more than one valuations defined on the same field we use the notations
$Kv$ and $vK$ to denote the residue field and value group of $K$ respectively
for the valuation $v$. We also use the notation $av$ to denote the residue class
of an element in $\ca{O}_v$.
For a subset $\Delta$ of an ordered group $\Gamma$ we use the notation
$$\gamma < \Delta$$
as a shorthand for $\gamma < \delta$ for all $\delta \in \Delta$. We set
$v(0):=\infty > \Gamma$, $\gi:=\Gamma \cup \{\infty\}$, $-\infty<\Gamma$
and for $\gamma \in \Gamma$ the intervals $(-\infty,\gamma)$ and
$(\gamma, \infty)$ are defined as usual. We use $m,n, \dots$ to denote
elements of $\mathbb{N}$ unless specified otherwise.

\begin{definition}
A valued field $\ca{K}$ is called {\em algebraically
maximal} if it does not admit proper immediate algebraic extensions
(that is, extensions which preserve value group and residue field).
Since henselizations are immediate algebraic extensions, every
algebraically maximal field is henselian.
\end{definition}

\begin{definition}
A valued field $\ca{K}$ is called {\em algebraically
complete} if it is henselian and for every finite extension
$(L,\Delta, \bl; w)$ we have
\begin{equation}                            \label{dl}
[L:K]=(\Delta:\Gamma)[\bl:\bk]\>.
\end{equation}
If equation (\ref{dl}) holds, then the extension is called
{\em defectless}.
\end{definition}

\begin{definition}
An ordered group is {\em regular} if for each $n$ every open interval
that contains $n$ elements contains an $n$-divisible element. A {\em
$\mathbb{Z}$-group} is a regular group which is discrete, i.e., has a
smallest positive element. An ordered group is {\em dense} if for any
two elements $\alpha<\beta$ there is an element $\gamma$ in the group
such that $\alpha<\gamma<\beta$.
\end{definition}

\medskip\noindent
Note that $\mathbb{Z}$-groups are exactly the ordered groups which
are elementarily equivalent to $\mathbb{Z}$. If $\Gamma$ is dense and
regular then for each $n$ and $\gamma \in \Gamma$
there is an increasing sequence $\{n\gamma_\rho\}_{\rho<\lambda}$ which is
cofinal in $(-\infty,\gamma)$.
We state two useful facts on regular groups, from
\cite{robinson-zakon} and \cite{conrad} respectively.

\begin{theorem}\label{reg1} A regular group is either a $\mathbb{Z}$-group
or it is dense.
\end{theorem}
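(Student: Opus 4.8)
The plan is to reduce the statement to the elementary dichotomy that \emph{every} ordered abelian group is either discrete (its positive cone has a least element) or dense, and then to invoke the definition of a $\mathbb{Z}$-group recorded above. Once one knows that a regular group $\Gamma$ falls into one of these two cases, the discrete case is handled for free: a group which is regular and discrete is, by the very definition given above, a $\mathbb{Z}$-group, while the dense case is exactly the second alternative of the theorem. So regularity itself is needed here only to name the discrete case correctly; the combinatorial substance lies entirely in the dichotomy, which in fact holds for arbitrary ordered abelian groups.

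First I would prove the dichotomy. Let $\Gamma$ be an ordered abelian group and suppose it is \emph{not} discrete, so that its positive cone $\Gamma_{>0}$ has no least element. To establish density, take $\alpha<\beta$ in $\Gamma$; then $\beta-\alpha>0$. If there were no positive element strictly below $\beta-\alpha$, then every positive element would be $\geq\beta-\alpha$, making $\beta-\alpha$ a least element of $\Gamma_{>0}$, contrary to our assumption. Hence there is some $\delta$ with $0<\delta<\beta-\alpha$, and then $\alpha<\alpha+\delta<\beta$, producing a group element strictly between $\alpha$ and $\beta$. Thus a non-discrete ordered abelian group is dense, which gives the dichotomy (the trivial group being vacuously dense).

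Next I would assemble the theorem. Given a regular group $\Gamma$, I apply the dichotomy. If $\Gamma$ is discrete, then being regular and discrete it is by definition a $\mathbb{Z}$-group; if $\Gamma$ is not discrete, then by the previous paragraph it is dense. These two alternatives are exactly those asserted, so the argument is complete.

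I expect the only delicate point, rather than a genuine obstacle, to be bookkeeping: confirming that ``not discrete'' is used in precisely the sense of ``no least positive element,'' and that the definitional identification of a regular discrete group with a $\mathbb{Z}$-group is all that the regularity hypothesis contributes to this particular statement. The heavier content of \cite{robinson-zakon} — the equivalence of the $\mathbb{Z}$-group property with elementary equivalence to $\mathbb{Z}$, noted in the remark above — is not needed for Theorem~\ref{reg1} itself.
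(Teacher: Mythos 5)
Your proof is correct, but it takes a different route from the paper for the simple reason that the paper gives no proof at all: Theorem~\ref{reg1} is quoted as a known fact from \cite{robinson-zakon}, alongside Theorem~\ref{reg2} from \cite{conrad}. Your observation is the right one, and worth making explicit: with the definitions adopted in this paper, where a $\mathbb{Z}$-group is \emph{defined} to be a regular group that is discrete, the theorem collapses to the elementary dichotomy that any ordered abelian group either has a least positive element or is dense, and your verification of that dichotomy (if $\beta-\alpha>0$ is not least in the positive cone, pick $0<\delta<\beta-\alpha$ and use $\alpha<\alpha+\delta<\beta$) is complete and uses only translation-invariance of the order. What the citation to \cite{robinson-zakon} buys, by contrast, is the substantive content hiding behind the paper's follow-up remark: under the classical definition, a $\mathbb{Z}$-group is an ordered group elementarily equivalent to $\mathbb{Z}$, and proving that a discrete regular group has this property is genuine model-theoretic work (a Presburger-style analysis), not a matter of bookkeeping. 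So your argument proves exactly the statement as formulated here, more elementarily and self-containedly than an appeal to the literature, while the reference covers the stronger fact that makes the notion of $\mathbb{Z}$-group useful later (e.g.\ in applying the Ax--Kochen--Ershov principle, where elementary equivalence of value groups is what matters). You correctly flagged that this stronger fact is not needed for Theorem~\ref{reg1} itself; just be aware that it is needed elsewhere in the paper, so the citation is not redundant.
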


\begin{theorem}\label{reg2} An ordered group $\Gamma$ is regular if and only if
$\Gamma/\Delta$ is divisible for every nonzero convex subgroup
$\Delta$ of $\Gamma$.
\end{theorem}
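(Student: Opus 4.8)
The plan is to prove both implications directly, recalling that regularity means: for every $n$, every open interval containing at least $n$ elements contains an element of $n\Gamma$. For the forward implication, suppose $\Gamma$ is regular, let $\Delta$ be a nonzero convex subgroup, and fix $\gamma \in \Gamma$, $n \geq 1$ and some $\delta_0 \in \Delta$ with $\delta_0 > 0$. The $n$ distinct points $\gamma + k\delta_0$ for $0 \le k \le n-1$ all lie in the open interval $(\gamma - \delta_0,\, \gamma + n\delta_0)$, so by regularity this interval contains some $n\eta$. Then $n\eta - \gamma$ lies strictly between $-\delta_0$ and $n\delta_0$, both of which belong to $\Delta$, so convexity forces $n\eta - \gamma \in \Delta$; hence $n\eta \equiv \gamma \pmod{\Delta}$, and $\Gamma/\Delta$ is divisible.

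For the converse, assume every quotient by a nonzero convex subgroup is divisible, fix $n$ and an open interval $(a,b)$ with at least $n$ elements, and set $d := b-a > 0$. Let $A$ be the smallest convex subgroup containing $d$ and let $B$ be the largest convex subgroup not containing $d$ (the union of all convex subgroups avoiding $d$); then $B \subsetneq A$ and the jump $A/B$ is archimedean. If $B \neq \{0\}$, then $\Gamma/B$ is divisible; since $d \notin B$ the images satisfy $\bar a < \bar b$, and as a divisible group is densely ordered and entirely $n$-divisible I may pick $\bar\gamma = n\bar\eta$ with $\bar a < \bar\gamma < \bar b$. Lifting, $\overline{n\eta} \in (\bar a, \bar b)$ forces $a < n\eta < b$, producing the required $n$-divisible element (the hypothesis on the number of elements is not needed in this case).

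If instead $B = \{0\}$, then $A$ itself is archimedean and $\Gamma/A$ is divisible. Choose $c_0 \in (a,b)$ and, by divisibility of $\Gamma/A$, an $\eta_0$ with $n\eta_0 = c_0 + \alpha_0$ for some $\alpha_0 \in A$. Replacing $\eta_0$ by $\eta_0 + \beta$ with $\beta \in A$ shifts $n\eta_0$ by $n\beta$, so it suffices to find $\beta \in A$ with $n\beta \in J := (a - c_0 - \alpha_0,\, b - c_0 - \alpha_0)$. Since every element of $\Gamma$ lying in $(a,b)$ differs from $a$ by an element of $(0,d) \subseteq A$, all such elements lie in $A$, and translating by $-(c_0 + \alpha_0) \in A$ exhibits $J$ as an open interval of $A$ of the same width $d$ containing at least $n$ elements of $A$. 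Thus this case reduces to the regularity of the archimedean group $A$.

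It then remains to record the lemma that every archimedean ordered abelian group $A$ is regular, which I would prove by fixing an order embedding $A \hookrightarrow \mathbb{R}$: if $A$ is discrete it is isomorphic to $\mathbb{Z}$ and the claim is immediate, whereas if $A$ is non-discrete it is dense in $\mathbb{R}$, so $nA$ is dense as well and meets every nonempty open interval. The main obstacle is precisely this archimedean bottom case of the converse: there the divisibility hypothesis is vacuous, since the only nonzero convex subgroup may be $A$ itself, so regularity cannot be imported from a proper quotient and must be established directly; this is also the one place where one must cope with $n\eta$ being pinned down only modulo $n\Delta$ rather than modulo $\Delta$, which is exactly where the assumption that the interval contains at least $n$ elements gets consumed.
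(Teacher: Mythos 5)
Your proof is correct, but there is nothing internal to compare it with: the paper does not prove Theorem~\ref{reg2} at all; it is quoted as a known fact from Conrad's paper \cite{conrad} (just as Theorem~\ref{reg1} is quoted from \cite{robinson-zakon}). So your argument has to stand on its own, and essentially it does. The forward direction is the standard argument and is carried out correctly. For the converse, your route --- isolate the convex jump $B\subsetneq A$ determined by the width $d=b-a$ of the given interval, handle $B\neq\{0\}$ using divisibility and density of the ordered quotient $\Gamma/B$ (where the count of $n$ elements is not needed), and reduce the case $B=\{0\}$ to regularity of the archimedean group $A$ via H\"older's embedding into $\mathbb{R}$ --- is a clean, self-contained proof. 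It also correctly locates where the hypothesis ``the interval contains $n$ elements'' is consumed, namely in the discrete archimedean bottom case; this is consistent with the fact that a $\mathbb{Z}$-group is regular without being divisible.

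One sentence in the case $B=\{0\}$ is misstated, though the proof survives. From ``every element of $(a,b)$ differs from $a$ by an element of $(0,d)\subseteq A$'' it does not follow that those elements ``lie in $A$'': they lie in the coset $a+A$, and likewise $c_0+\alpha_0$ lies in $a+A$, so ``translating by $-(c_0+\alpha_0)\in A$'' is unjustified as written. What saves you is that you only need the translated interval $J=(a-c_0-\alpha_0,\,b-c_0-\alpha_0)$ to be an interval \emph{of} $A$: its endpoints are $-(c_0-a)-\alpha_0$ and $d-(c_0-a)-\alpha_0$, which do lie in $A$ because $c_0-a\in(0,d)\subseteq A$, $\alpha_0\in A$ and $d\in A$; convexity of $A$ then forces every element of $\Gamma$ lying in $J$ to be in $A$, and the translation bijection from $(a,b)\cap\Gamma$ onto $J\cap\Gamma$ supplies the required $n$ elements. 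With that one-line repair the reduction to the archimedean case, and hence the whole proof, is sound.
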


\begin{definition}
Take $\mathcal{A}\subseteq \mathcal{B}$ to be an extension of two
structures of a first order language. Then $\mathcal{A}$ is {\em
existentially closed in} $\mathcal{B}$ if every existential sentence
with parameters from $\mathcal{A}$ that holds in $\mathcal{B}$ also
holds in $\mathcal{A}$.
\end{definition}

\begin{definition}
A field $\bk$ is {\em large} if every curve over $k$ which has a
smooth $k$-rational point, has infinitely many such points.
\end{definition}

\noindent
Algebraically closed fields, real closed fields, pseudo-algebraically
closed fields, fields equipped with a henselian valuation are all large
fields. Finite fields are not large. Note that a field $\bk$ is large if
and only if it is existentially closed in $\bk((t))$ (\cite{pop}; see
also \cite{kuhlmann3}).


For the following result, see Theorem 17 of \cite{kuhlmann3}.
\begin{theorem}                             \label{lec}
Suppose that $\bk$ is a large and perfect subfield of a field $F$. If
$\bk$ is the residue field of a valuation on $F$ which is trivial on
$\bk$, then $\bk$ is existentially closed in $F$ (as a field).
\end{theorem}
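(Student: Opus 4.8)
The plan is to verify the defining property of $\bk\ec F$ directly. An existential sentence over $\bk$ holding in $F$ amounts to the existence of a tuple $a=(a_1,\dots,a_n)\in F^n$ with $g_i(a)=0$ for finitely many $g_i\in\bk[X_1,\dots,X_n]$ and $h(a)\neq 0$ for some $h\in\bk[X_1,\dots,X_n]$; I must produce such a tuple in $\bk^n$. Replacing $F$ by the finitely generated subfield $F_0=\bk(a)$ and $w$ by its restriction $w_0$, I first note that $w_0$ is again trivial on $\bk$ and has residue field $\bk$: its residue field lies between $\bk$ and $Fw=\bk$. In particular $\bk$ is a field of representatives for $w_0$, so it suffices to prove $\bk\ec F_0$, after which the witness $a\in F_0^n$ yields the required witness in $\bk^n$.

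The engine of the proof is the characterization recalled above: $\bk$ is large if and only if $\bk\ec\bk((t))$. The basic case is when $w_0$ has value group $\mathbb{Z}$. Since $w_0$ is trivial on $\bk\subseteq F_0$ we are in equal characteristic, so the completion of $(F_0,w_0)$ embeds over $\bk$ into the Laurent series field $\bk((t))$ (same value group, same residue field $\bk$, and the embedding fixes the field of representatives); thus $a\in\bk((t))^n$, and since $\bk\ec\bk((t))$ the system $g_i=0$, $h\neq 0$ already has a solution in $\bk$.

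For the general case I would reduce to this by decomposing the valuation. Because $F_0/\bk$ is finitely generated, $\Gamma_0=w_0F_0$ is a finitely generated ordered abelian group, hence free of finite rank and of finite archimedean rank; the chain of convex subgroups exhibits $w_0$ as a finite composition of rank-one valuations. Embedding $(F_0,w_0)$ over $\bk$ into an iterated power series field $\bk((t_1))\cdots((t_r))$ and using that each intermediate field $\bk((t_1))\cdots((t_j))$ is henselian, hence large, I would apply the largeness characterization at every level and compose: since $\bk\ec\bk((t_1))$ and $\bk((t_1))\cdots((t_j))\ec\bk((t_1))\cdots((t_{j+1}))$ for each $j$, transitivity of $\ec$ gives $\bk\ec\bk((t_1))\cdots((t_r))$, and hence a solution of the system in $\bk$. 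Perfectness of $\bk$ enters here to guarantee that the residue fields occurring in the tower are again perfect and that the relevant residue extensions stay separable, so that both the embedding into the iterated series field and the largeness of the intermediate fields are available.

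The main obstacle is exactly this passage from value group $\mathbb{Z}$ to an arbitrary finitely generated (and possibly densely ordered, higher archimedean-rank) value group: one must secure an embedding of $(F_0,w_0)$ over $\bk$ into a power series field whose residue field is still $\bk$, which is where maximality and Kaplansky-type embedding results together with the perfectness hypothesis are needed, and one must check that existential closedness genuinely propagates up the tower. By contrast, the reduction to a finitely generated situation and the extraction of the $\bk$-point from a $\bk((t))$-point are routine once the largeness characterization is in hand.
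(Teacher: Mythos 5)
Your reduction to a finitely generated subfield $F_0=\bk(a)$ and your treatment of the rank-one discrete case (completion, Cohen's structure theorem, $\bk\ec\bk((t))$) are correct, but the passage to the general case breaks at its very first step: it is false that $\Gamma_0=w_0F_0$ must be finitely generated when $F_0|\bk$ is finitely generated. Abhyankar's inequality only bounds the \emph{rational rank} of $\Gamma_0$; the value group itself is finitely generated only for Abhyankar valuations (those for which that inequality is an equality). For example, inside the field of generalized power series over $\bk$ with exponents in $\mathbb{Q}$, put $x=t$ and $y=\sum_{n\geq 1}t^{c_n}$ with $c_n$ increasing of denominator $2^n$; the induced valuation on $F_0=\bk(x,y)$ is trivial on $\bk$, has residue field $\bk$, and its value group contains every $c_{n+1}=w_0\big(y-\sum_{j\leq n}t^{c_j}\big)$, hence is a non-finitely generated subgroup of $\mathbb{Q}$. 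Such a group embeds in no $\mathbb{Z}^r$ (all subgroups of $\mathbb{Z}^r$ are free of finite rank), so $(F_0,w_0)$ admits no valued-field embedding into any iterated Laurent series field $\bk((t_1))\cdots((t_r))$: the target of your proposed embedding does not exist. (Even a finitely generated value group need not carry a lexicographic order, e.g.\ $\mathbb{Z}+\mathbb{Z}\sqrt{2}\subset\mathbb{R}$, which likewise embeds in no $\mathbb{Z}^r$ ordered lexicographically.)

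The second gap is the one you yourself flag as ``the main obstacle'' and propose to settle by ``maximality and Kaplansky-type embedding results together with the perfectness hypothesis''; this cannot be done, and it is where the entire content of the theorem sits. In equal characteristic $p$, Kaplansky's theory requires his Hypothesis~A on the residue field (solvability in $\bk$ of every equation $\sum_i a_ix^{p^i}=b$), which is strictly stronger than perfectness and fails for perfect large fields in general: a pseudo-finite field of characteristic $p$ is perfect and PAC, hence large, yet has Artin--Schreier extensions. Without Hypothesis~A, maximal immediate extensions are no longer unique, there are maximal valued fields with residue field $\bk$ and value group $\Gamma$ not isomorphic to $\bk((\Gamma))$, and no embedding theorem of the kind you invoke is available. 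Moreover, even granted an embedding into some Hahn field $\bk((\Gamma))$ with $\Gamma$ dense, you would still need $\bk\ec\bk((\Gamma))$, which is not the definition of largeness (that gives only $\bk\ec\bk((t))$) and is not produced by your tower argument (under a dense value group there is no finite tower of discrete rank-one coarsenings); it is itself an instance of the theorem being proved. This is precisely why the paper does not prove the statement at all but cites Theorem~17 of \cite{kuhlmann3}, whose proof proceeds by entirely different means --- the structure theory of immediate extensions of valued function fields over tame fields (``henselian rationality'') --- developed there exactly because power series embeddings are unavailable in positive characteristic.
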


Finally, we will need the following two well known technical lemmas.
\begin{lemma}                               \label{ww}
Let $\ca{K}=(K,\Gamma,\bk;v)$ be a valued field such that $v=w\circ
\bar w$. Then $\ca{K}$ is henselian if and only if $w$ is henselian on
$K$ and $\bar w$ is henselian on the residue field $Kw$ of $K$ under
$w$. The same holds for ``algebraically complete'' in the place of
``henselian''.
\end{lemma}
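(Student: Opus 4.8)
The plan is to treat henselianity and algebraic completeness separately, in each case exploiting the correspondence between extensions of the composite valuation $v=w\circ\bar w$ and pairs consisting of an extension of $w$ followed by an extension of $\bar w$. Throughout I write $\rho_w\colon\ca{O}_w\to Kw$ and $\rho_{\bar w}\colon\ca{O}_{\bar w}\to\bk$ for the residue maps, and record the standard facts $\rho_w(\ca{O}_v)=\ca{O}_{\bar w}$, $\fr{m}_w\subseteq\fr{m}_v$, and $\rho_v=\rho_{\bar w}\circ\rho_w$ on $\ca{O}_v$.

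For henselianity, the backward direction is a two-step Hensel's Lemma: given $f\in\ca{O}_v[X]$ with $f(a)\in\fr{m}_v$ and $f'(a)\in\ca{O}_v^\times$, the reduction $\bar f:=\rho_w(f)\in\ca{O}_{\bar w}[X]$ has $\rho_w(a)$ as a simple root modulo $\fr{m}_{\bar w}$, so henselianity of $\bar w$ produces a root $\bar b$ of $\bar f$ in $\ca{O}_{\bar w}$; lifting $\bar b$ to $b_0\in\ca{O}_v$ makes $b_0$ a simple root of $f$ modulo $\fr{m}_w$, and henselianity of $w$ then yields an exact root $b$, which lies in $\ca{O}_v$ because $\fr{m}_w\subseteq\ca{O}_v$ and satisfies $b\equiv a\pmod{\fr{m}_v}$. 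For the forward direction, $w$ is henselian since a coarsening of a henselian valuation is henselian (provable by refining any two extensions of $w$ to an algebraic extension into extensions of $v$ and invoking uniqueness of the latter); and $\bar w$ is henselian by reversing the lifting step above, namely lifting a polynomial over $\ca{O}_{\bar w}$ with a simple residue root to $\ca{O}_v[X]$, solving by henselianity of $v$, and projecting the root back down along $\rho_w$.

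For algebraic completeness, having henselianity in hand, I would reduce everything to the behaviour of the defect. For a finite extension $L/K$, let $w'$ be the extension of $w$ to $L$ and $\bar w'$ the extension of $\bar w$ to $Lw'$, so $v'=w'\circ\bar w'$. The value groups sit in compatible short exact sequences and the residue field of $v'$ is that of $\bar w'$, whence $(v'L:vK)=(w'L:wK)\cdot(\bar w'(Lw'):\bar w(Kw))$ and $[Lv':Kv]=[Lw'\bar w':Kw\bar w]$. Combining these with the fundamental inequalities for $w$ and for $\bar w$ gives
\[
[L:K]\;\ge\;(w'L:wK)\,[Lw':Kw]\;\ge\;(w'L:wK)\,(\bar w'(Lw'):\bar w(Kw))\,[Lv':Kv]\;=\;(v'L:vK)\,[Lv':Kv],
\]
which is exactly multiplicativity of the defect, $d(L/K,v)=d(L/K,w)\cdot d(Lw'/Kw,\bar w)$. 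If $w$ and $\bar w$ are defectless then both factors equal $1$ for every $L$, so $v$ is defectless; conversely, if $v$ is defectless the chain collapses, forcing $w$ to be defectless on $L/K$ and $\bar w$ to be defectless on every residue extension $Lw'/Kw$ arising from a finite $L/K$.

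The main obstacle is this last clause: to conclude that $\bar w$ is defectless on \emph{all} of $Kw$, not merely on those residue extensions realized by finite extensions of $K$. I would close this gap with a realization lemma: since $w$ is henselian, every finite extension $\ell/Kw$ has the form $Lw'$ for some finite $L/K$. Separable steps are handled by lifting a minimal polynomial to a monic polynomial over $\ca{O}_w$, which remains irreducible over $K$ by Gauss's Lemma and whose root generates an $L/K$ with $Lw'=\ell$ and no ramification or defect. The genuinely delicate point is the purely inseparable case in residue characteristic $p>0$, where one lifts a relation $\alpha^p=a$ to $X^p-\tilde a$ and must verify that a unit lift $\tilde a$ keeps the polynomial irreducible over $K$ with residue extension exactly $\ell$, rather than introducing ramification or defect. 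Granting this realization, defectlessness of $v$ transfers to full defectlessness of $\bar w$, completing the case. (In residue characteristic $0$ the defect is always trivial, so this obstacle evaporates and the two displayed equalities of fundamental type suffice.)
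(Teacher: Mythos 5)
The paper gives no proof of Lemma~\ref{ww} at all: it is quoted as one of two ``well known technical lemmas,'' so there is no argument of the authors to compare yours against, and your proposal must stand on its own. It does. Both halves follow the standard route. The henselian part is correct in each direction: the two-step Hensel lifting (solve over $Kw$ using $\bar w$, lift an exact root, then solve using $w$) for the backward implication, and, for the forward implication, the uniqueness-of-extension characterization for $w$ (two extensions of $w$ to an algebraic extension, composed with Chevalley extensions of $\bar w$ to their residue fields, would give two extensions of $v$) together with the lift--solve--project argument for $\bar w$. The defect part is also structured correctly: the index multiplicativity $(v'L:vK)=(w'L:wK)\,(\bar w'(Lw'):\bar w(Kw))$, the identification $Lv'=(Lw')\bar w'$, and the two fundamental inequalities (legitimate in their single-extension form, since henselianity of all three valuations is already available from the first part) combine to give multiplicativity of the defect; and you correctly isolate the one genuine issue, namely that defectlessness of $v$ a priori yields defectlessness of $\bar w$ only on extensions of $Kw$ of the form $Lw'$ with $L|K$ finite.

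The realization lemma you ``grant'' is true, and your sketch closes except for one observation you stopped just short of making. In the purely inseparable step, take $\alpha^p=a$ with $a\in Kw\setminus(Kw)^p$ (note $a\neq 0$, else $\alpha\in Kw$) and a unit lift $\tilde a\in\ca{O}_w^\times$. If $X^p-\tilde a$ were reducible over $K$, then $\tilde a=c^p$ for some $c\in K$; but $c$ is integral over $\ca{O}_w$ and valuation rings are integrally closed in their fraction fields, so $c\in\ca{O}_w$ and $a=(cw)^p\in(Kw)^p$, a contradiction. Then for $L=K(\beta)$, $\beta^p=\tilde a$, one has $w'(\beta)=0$ and $(\beta w')^p=a$, so $Lw'$ contains a copy of $\ell=Kw(\alpha)$, and the fundamental inequality $p=[L:K]\geq(w'L:wK)[Lw':Kw]\geq[\ell:Kw]=p$ collapses, giving $Lw'\cong\ell$ with no ramification and no defect --- exactly the verification you asked for. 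The same integral-closedness (Gauss) argument gives irreducibility of the monic lift in the separable step, and the general finite extension is handled by induction along a tower of simple extensions, realizing each step over the field constructed at the previous one; since defectlessness is an isomorphism invariant, realization up to $Kw$-isomorphism suffices. With that routine filling, your proof is complete; no step of it fails.
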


By use of Hensel's Lemma, one proves:
\begin{lemma}                               \label{emb}
Let $\ca{K}=(K,\Gamma,\bk;v)$ be a perfect and henselian valued field.
Then every embedding of a subfield of $\bk$ such that $v$ is trivial on
the image can be extended to an embedding of $\bk$ such that $v$ is
trivial on the image.
\end{lemma}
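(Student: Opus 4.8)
The plan is to exhaust $\bk$ by a Zorn's Lemma argument, extending the embedding one element at a time, with Hensel's Lemma doing the real work at the algebraic steps. Write $\iota$ for the given embedding of a subfield $k_0\subseteq\bk$ into $K$ with $v$ trivial on $\iota(k_0)$. The invariant I would carry along is that the embeddings in play are compatible with the residue map, i.e.\ each is a partial lift of the inclusion $k_0\hookrightarrow\bk$ (a field of representatives of its domain); this is the natural reading for the coefficient-field lifts to which the lemma is applied, and it is exactly what keeps the residue of the image under control. Accordingly I consider the poset of all residue-compatible embeddings $\sigma\colon F\to\ca{O}_v$ with $k_0\subseteq F\subseteq\bk$, extending $\iota$ and with $v$ trivial on $\sigma(F)$, ordered by extension. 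The union of a chain of these is again one of them, so Zorn's Lemma produces a maximal element $\sigma_0\colon F_0\to K$, and it remains to prove $F_0=\bk$.

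Suppose $F_0\neq\bk$ and pick $a\in\bk\setminus F_0$. Since $K$ is perfect, so is $\bk$ (every residue is the residue of a $p$-th power, hence a $p$-th power), so $a$ is either transcendental or \emph{separably} algebraic over $F_0$. The transcendental case is routine: choose any lift $b\in\ca{O}_v$ with $\overline{b}=a$. Because $\sigma_0$ is residue-compatible the residue of $\sigma_0(F_0)$ is $F_0$, and $\overline{b}=a$ is transcendental over $F_0$; hence for every nonzero $P$ over $\sigma_0(F_0)$ one has $\overline{P(b)}=\overline{P}(a)\neq 0$, so $v$ is trivial on $\sigma_0(F_0)(b)$. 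Sending $a\mapsto b$ extends $\sigma_0$ and stays residue-compatible, contradicting maximality.

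The main step, and the place where Hensel's Lemma enters, is the separably algebraic case. Let $f\in F_0[X]$ be the minimal polynomial of $a$, transport it to $f^{\sigma_0}\in\sigma_0(F_0)[X]\subseteq\ca{O}_v[X]$, and reduce modulo $\fr{m}_v$. Residue-compatibility gives $\overline{f^{\sigma_0}}=f$ as a polynomial over the residue copy $F_0$ of $\sigma_0(F_0)$, so $a$ is a root of $\overline{f^{\sigma_0}}$, and separability forces $\overline{f^{\sigma_0}}{}'(a)\neq 0$, i.e.\ $a$ is a \emph{simple} root. Henselianity then lifts $a$ to a root $b\in\ca{O}_v$ of $f^{\sigma_0}$ with $\overline{b}=a$. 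The assignment $a\mapsto b$ defines an embedding of $F_0(a)=F_0[X]/(f)$ extending $\sigma_0$; since $b$ is integral over $\sigma_0(F_0)$ and the residue map sends $\sigma_0(F_0)(b)=\sigma_0(F_0)[b]$ isomorphically onto $F_0(a)\subseteq\bk$, the valuation is trivial on the enlarged image and residue-compatibility persists. This again contradicts maximality, so $F_0=\bk$ and $\sigma_0$ is the required extension.

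The delicate point throughout is not the lifting of roots but the bookkeeping that guarantees each enlargement keeps $v$ trivial and remains a residue section: it is precisely the residue-compatibility that ensures, in the algebraic case, that the reduced polynomial has the needed simple root (so that Hensel's Lemma applies) and, in the transcendental case, that a suitable transcendental residue is genuinely available. Indeed, without this invariant the conclusion can fail, so maintaining it is the heart of the argument rather than an afterthought.
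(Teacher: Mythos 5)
Your Zorn setup, your transcendental step, and your separable--algebraic step (via Hensel) are all sound, and your decision to carry residue-compatibility as an invariant is the right reading of the statement: it is what the paper's application in Theorem~\ref{divisible} provides, and without it the literal statement can indeed fail. The gap is the dichotomy you base the whole argument on: ``since $\bk$ is perfect, $a$ is either transcendental or \emph{separably} algebraic over $F_0$.'' Perfectness of $\bk$ says nothing about separability over a \emph{subfield} $F_0$, which need not be perfect. In equal characteristic $p>0$ the maximal $F_0$ may a priori admit elements of $\bk$ that are purely inseparable over it (e.g.\ $F_0=\mathbb{F}_p(s)$ and $a=s^{1/p}\in\bk$), and then neither of your cases applies: the reduced minimal polynomial is $X^p-a^p$, of which $a$ is a $p$-fold root, so Hensel's Lemma gives nothing.

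This is not a removable technicality, because as written your proof only ever uses that $\bk$ is perfect, and the lemma is \emph{false} under that weaker hypothesis. Take $\bk=\mathbb{F}_p(s)^{1/p^\infty}$ (perfect), $K=\bk((t))$ with the $t$-adic valuation (henselian, but not perfect), $k_0=\mathbb{F}_p(s)$, and the residue-compatible embedding $\sigma_0(s)=s+t$; then $v$ is trivial on the image, but $\sigma_0$ cannot be extended even to $\mathbb{F}_p(s^{1/p})\subseteq\bk$, since $s+t$ is not a $p$-th power in $\bk((t))$ (all $p$-th powers there have only exponents divisible by $p$). The missing step is exactly where perfectness of $K$ itself must enter: if $a\in\bk\setminus F_0$ with $a^p\in F_0$, then because Frobenius is bijective on $K$, the element $\sigma_0(a^p)\in\ca{O}_v$ has a unique $p$-th root $b\in K$; one has $v(b)=0$, and $\bar{b}^p=a^p$ forces $\bar{b}=a$, so $a\mapsto b$ extends $\sigma_0$ residue-compatibly, contradicting maximality. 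Once this purely inseparable case is included, the maximal $F_0$ is necessarily perfect (as $\bk$ is perfect, every $c\in F_0$ has its $p$-th root in $\bk$, which must then lie in $F_0$), and only then does your dichotomy --- and with it the rest of your argument --- become valid.
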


\end{section}

\begin{section}{Properties of extremal fields}      \label{sectprop}

\begin{definition}                           \label{ext}
A valued field $\ca{K}$ is {\em extremal} if for every
multi-variable polynomial $F(X_1, \dots , X_n)$ over $K$ the set
$$\{v(F(a_1,\dots,a_n)): a_1,\dots a_n \in \ca{O}_v\}
\subseteq \gi$$ has a maximal element.
\end{definition}

\begin{remark}                                \label{ershovw}
\rm The original definition presented in \cite{ershovext} asks for a
maximal element of the set $$\{v(F(a_1,\dots,a_n)): a_1,\dots a_n\in K\}
\subseteq \gi$$ where $K$ and $F$ are as above. This condition is not
satisfied for the polynomial $$F(X,Y)=X^2+(XY-1)^2$$ over the Laurent
series field $\mathbb{R}((t))$ as $$v\big(F(t^n, t^{-n})\big)=2n$$
and $F(a,b) \neq 0$ for all $a,b \in \mathbb{R}((t))$. Hence,
Proposition 2 of~\cite{ershovext} does not hold for the
original definition.

Suppose that $K$ is a valued field which is not algebraically closed.
Take a polynomial $f(x)=x^n+a_{n-1}x^{n-1}+\cdots+a_0 \in K[x]$ with no
zeros in $K$ and let
$$G(X,Y)=X^n+a_{n-1}X^{n-1}Y+\cdots+a_0Y^n$$
be its homogenization. Then the polynomial $F(X,Y)=G(X,XY-1)$ does not
satisfy the above condition. Indeed, $G(X,Y)$ can only be zero if $Y$
is zero and then, consequently, also $X$ is zero. So $F(X,Y)$ can only
be zero if $XY-1$ and $X$ are zero, which is impossible. On the other
hand, for $a\ne 0$ we have that $vF(a,a^{-1})=v(a^n)=nv(a)$, which shows
that $\{v(F(a,b)\mid a,b\in K\}$ has no maximum.
\end{remark}

The following is a consequence of Theorem~1.5 of \cite{kuhlmann}:
\begin{proposition}                         \label{eam}
Every extremal field is algebraically maximal and hence henselian.
\end{proposition}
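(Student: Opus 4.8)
The plan is to show that if $\mathcal{K}$ is extremal, then it admits no proper immediate algebraic extension, which by the definition of algebraically maximal and the remark that henselizations are immediate algebraic extensions yields henselianity for free. The natural strategy is to argue by contraposition: suppose $\mathcal{K}$ is not algebraically maximal, so there is a proper immediate algebraic extension. I would then produce a single polynomial in possibly several variables over $K$ whose value set over $\mathcal{O}_v$ has no maximum, contradicting extremality. The link to Theorem 1.5 of \cite{kuhlmann} (which I am permitted to cite as a black box) suggests that the failure of algebraic maximality is detected precisely by the existence of an element in the extension whose distances to $K$ form a set with no maximum — this is the standard ``pseudo-Cauchy sequence of algebraic type'' picture, and the key is to transport that approximation data into polynomial value sets.

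Concretely, I would first recall that a proper immediate algebraic extension arises from a pseudo-Cauchy sequence $(a_\rho)$ in $K$ of algebraic type, with limit $a$ in the extension but no limit in $K$, annihilated by some irreducible polynomial $f(X) \in K[X]$ of minimal degree. The crucial feature of immediacy is that the values $v(a - a_\rho)$ form a cofinal-type increasing sequence in $\Gamma_\infty$ with no supremum realized by any single element of $K$; equivalently, for every $b \in K$ there is $\rho$ with $v(a - a_\rho) > v(a - b)$, so $v(f(b))$ can be made to exceed any prescribed bound. The heart of the argument, and the step I expect to be the main obstacle, is precisely this translation: I must show that the value set $\{v(f(b)) : b \in \mathcal{O}_v\}$ (after a suitable normalization so that the relevant witnesses $b$ lie in $\mathcal{O}_v$ rather than merely in $K$) is unbounded in $\Gamma$, so that it cannot contain a maximal element below $\infty$, while $f(b) \neq 0$ for all $b \in K$ since $f$ has no root in $K$. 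That last point guarantees the value $\infty$ is never attained, so a maximum would have to be a genuine element of $\Gamma$, which unboundedness rules out.

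The normalization to the valuation ring is a minor technical wrinkle rather than a conceptual difficulty: after translating and scaling by a suitable element of $K^\times$, one arranges that the approximating sequence lies in $\mathcal{O}_v$ and that values stay nonnegative, so the single-variable polynomial $f$ (or a scaled version of it) exhibits a value set in $\Gamma_{\infty}$ with no maximum. Strictly speaking Definition~\ref{ext} is about multivariable polynomials, but a one-variable polynomial is a special case, so producing one suffices to contradict extremality. The genuinely delicate part is verifying, from the hypothesis that the extension is immediate (same value group and residue field), that the approximation values $v(f(a_\rho))$ are strictly increasing and cofinal without bound in the relevant coset structure — this is exactly the content packaged by Theorem~1.5 of \cite{kuhlmann}, so in the write-up I would lean on that theorem to supply the pseudo-Cauchy machinery and restrict my own work to checking that its conclusion is incompatible with the maximum-existence requirement of extremality. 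Once algebraic maximality is established, henselianity follows immediately, as already noted, because the henselization is an immediate algebraic extension and hence must be trivial.
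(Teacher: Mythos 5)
Your overall skeleton --- reduce to a one-variable polynomial, invoke the immediate-extension/pseudo-Cauchy picture, and defer the hard machinery to Theorem~1.5 of \cite{kuhlmann} --- coincides with what the paper actually does: its entire ``proof'' is the remark that the proposition is a consequence of that theorem, so your reduction and the normalization into $\ca{O}_v$ are unobjectionable. However, there is a genuine error in what you claim the machinery delivers. You assert that the value set $\{v(f(b)) : b \in \ca{O}_v\}$ is \emph{unbounded} in $\Gamma$ (``exceed any prescribed bound'', ``cofinal without bound''), and you propose to contradict extremality that way. This is false precisely in the decisive case. If $K$ is henselian but admits a proper immediate algebraic extension generated by a separable element $a$ with minimal polynomial $f$ (the defect situation, which is the only situation left once one notes that a non-henselian field is already non-maximal via its henselization), then Krasner's Lemma forces $v(a-b)$ to stay below the Krasner constant of $a$ for every $b \in K$, since otherwise $a \in K(b)=K$; consequently $\{v(f(b)) : b \in K\}$ is bounded above in $\Gamma$. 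It merely has no \emph{maximal element}, which is weaker than unboundedness but is all that is needed to contradict Definition~\ref{ext}. The paper's own construction in Section~\ref{sectchar} illustrates exactly this picture: there the relevant values are all negative, cofinal in $wL^{<0}$, hence bounded above by $0$ yet without maximum. A write-up that literally tries to prove unboundedness will fail at that point; the statement to import from the cited theorem is ``no maximum'', not ``cofinal in $\Gamma$''.

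A second, smaller caveat: the transfer from ``the distances $v(a-b)$, $b \in K$, have no maximum'' to ``the values $v(f(b))$ have no maximum'' is not routine even granting the pseudo-Cauchy setup, because an arbitrary $b$ may lie closer to a conjugate of $a$ than $a$ itself, spoiling term-by-term comparison of $v(f(b))=v(c)+\sum_i v(b-a_i)$. The standard repair (Kaplansky) is to work with a polynomial $q$ of \emph{minimal degree} among those whose values along the pseudo-Cauchy sequence eventually strictly increase; if some $b\in K$ had $v(q(b))$ maximal, then $q(X)-q(b)$ would again have eventually strictly increasing values, hence be irreducible of degree $\geq 2$ by the minimality argument, yet it has the root $b$ in $K$ --- a contradiction. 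You correctly flag this translation as the main obstacle and defer it to Theorem~1.5 of \cite{kuhlmann}, which is legitimate (the paper defers everything to it); just be aware that the minimal polynomial of $a$ by itself, without this minimal-degree device or an equivalent, does not obviously do the job.
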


%
%

\begin{proposition}                                  \label{exreg}
Let $\ca{K}=(K,\Gamma, \bk; v)$ be a valued field with $\Gamma$ dense
and regular. If $\ca{K}$ is extremal, then $\Gamma$ is divisible.
\end{proposition}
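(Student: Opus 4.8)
The plan is to argue within the hypotheses by contradiction: assume $\ca{K}$ is extremal with $\Gamma$ dense and regular, suppose $\Gamma$ is \emph{not} divisible, and produce a polynomial violating extremality. Non-divisibility yields a prime $p$ with $p\Gamma\neq\Gamma$; choosing $\gamma\in\Gamma\setminus p\Gamma$ and replacing it by $\gamma+p\delta$ for suitable $\delta$ (using that $p\Gamma$ is cofinal in $\Gamma$) I may assume $\gamma>0$. Fix $c\in K$ with $v(c)=\gamma$. The key elementary observation is that $x^p-c$ has no root in $K$: a root would have value $\gamma/p$, forcing $\gamma\in p\Gamma$. Hence $K(c^{1/p})/K$ is an extension of degree $p$, whose value group is $\Gamma+\mathbb{Z}\tfrac{\gamma}{p}$.

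The goal is then to exhibit a single polynomial $F$ over $K$ whose value set on $\ca{O}_v$ is cofinal in some interval $(-\infty,\lambda)$ but contains neither $\lambda$ nor $\infty$. Since $\Gamma$ is dense, $(-\infty,\lambda)\cap\Gamma$ has no largest element, so such an $F$ has no maximal value, contradicting extremality. The two hypotheses are meant to supply the two requirements. Density of $\Gamma$, together with the cofinality property recorded after the definition of regular groups (for each $p$ and each $\gamma$ the $p$-divisible elements $p\gamma_\rho$ are cofinal in $(-\infty,\gamma)$), lets the value of $F$ approach $\lambda$ from below through values $pv(X_\rho)$ of $p$-th powers; while the failure of $p$-divisibility, reflected in the distinctness of the cosets $0,\gamma,\dots,(p-1)\gamma$ modulo $p\Gamma$, is meant to force every attained value strictly off $\lambda$.

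The natural device is the norm form of $K(c^{1/p})/K$: writing $\alpha=\sum_{i=0}^{p-1}x_i\,(c^{1/p})^i$, the form $N(x_0,\dots,x_{p-1})$ is anisotropic (it vanishes only at the origin, as $K(c^{1/p})$ is a field), and since the quantities $pv(x_i)+i\gamma$ lie in pairwise distinct cosets modulo $p\Gamma$ there is no cancellation, giving $v\big(N(\alpha)\big)=\min_i\big(pv(x_i)+i\gamma\big)$. This already realises values in all the relevant cosets and is nonzero away from the origin. The main obstacle is the final requirement: to kill the origin (so that $\infty$ is not attained) while keeping the supremum \emph{unattained}. This is delicate, since the obvious dehomogenizations misbehave: for $p=2$ the substitution $x_0=X$, $x_1=XY-1$ gives $F=X^2-c(XY-1)^2$, which is never zero but whose value set does attain its supremum $\gamma$ (once $pv(X)>\gamma$ the second term pins the value to $\gamma$). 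Because $\ca{K}$ is algebraically maximal by Proposition~\ref{eam}, the sought non-attainment cannot arise from approaching an algebraic zero; it must come from an immediate, necessarily transcendental, approach to a missing value. Thus the technical heart of the proof is to couple the variables in the (dehomogenized) norm form so that pushing the value upward forces a competing term strictly below $\lambda$, using density to approach $\lambda$ and non-$p$-divisibility to block equality, and then to verify that the resulting value set is genuinely cofinal in $(-\infty,\lambda)$ while never reaching $\lambda$ or $\infty$.
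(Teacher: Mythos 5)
Your setup is sound and your diagnosis of the difficulty is exactly right: reduce to a prime $p$ and a positive $\gamma\notin p\Gamma$, aim for a polynomial whose value set on $\ca{O}_v$ approaches some bound $\lambda$ from below without ever reaching $\lambda$ or $\infty$, and observe that the obvious dehomogenized norm form $X^2-c(XY-1)^2$ fails because its supremum $\gamma$ \emph{is} attained as soon as $2v(X)>\gamma$. But the proposal stops precisely where the proof must begin. The entire mathematical content of the proposition is the construction of a coupling that avoids the defect you identified, together with the verification of the two properties (cofinality below $\lambda$, and all values strictly below $\lambda$); you explicitly defer both (``the technical heart of the proof is to couple the variables \dots\ and then to verify \dots''). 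Nothing in your text shows such a coupling exists, and your own $p=2$ example shows its existence is not automatic. As written, this is a correctly framed plan with a correctly identified obstacle, not a proof.

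For comparison, here is how the paper fills exactly this gap: for $n>1$ and $\epsilon$ with $v(\epsilon)=\gamma\notin n\Gamma$, it takes
\[F(x,y)\>=\>x^{4n}+\epsilon(xy-\epsilon^2)^n+\epsilon^2y^{4n}.\]
Since $\gamma\notin n\Gamma$, the three values $4nv(a)$, $\gamma+nv(ab-\epsilon^2)$, $2\gamma+4nv(b)$ are pairwise distinct, so $v(F(a,b))$ is their minimum. The middle term supplies the coupling you were after, and the third term is the ``anchor'' missing from your two-term attempt: if $v(F(a,b))\geq 4n\gamma+\gamma$ for $a,b\in\ca{O}_v$, then the first term forces $4nv(a)>4n\gamma+\gamma$ and the middle term forces $v(a)+v(b)=2\gamma$, whence $4nv(b)<4n\gamma-\gamma$ and the third term has value $2\gamma+4nv(b)<4n\gamma+\gamma$, a contradiction. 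So every value is $<4n\gamma+\gamma$ (in particular $F$ never vanishes on $\ca{O}_v^2$). Conversely, choosing $\delta_\rho$ with $4n\delta_\rho$ cofinal in $(4n\gamma,4n\gamma+\gamma)$ (this is where density and regularity enter), $v(a_\rho)=\delta_\rho$ and $b_\rho=\epsilon^2a_\rho^{-1}\in\ca{O}_v$ makes the middle term vanish identically and gives $v(F(a_\rho,b_\rho))=4n\delta_\rho$, cofinal below the bound. Note also the non-obvious tuning: the exponents $4n,n,4n$ and coefficients $1,\epsilon,\epsilon^2$ are chosen so that pairwise distinctness of the three values follows from $\gamma\notin n\Gamma$ alone (e.g.\ equality of the first and third values would give $2\gamma\in 4n\Gamma$, hence $\gamma\in n\Gamma$ by torsion-freeness). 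This construction and its two verifications are what your proposal still owes.
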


\begin{proof}
Let $\gamma \in \Gamma$ be positive and not divisible by $n>1$.
Consider the polynomial
$$F(x,y)=x^{4n}+\epsilon(xy-\epsilon^2)^n+\epsilon^2y^{4n}$$
over $K$ where $v(\epsilon)=\gamma$. Note that for all
$a, b \in K^{\times}$,
$$v(a^{4n}),\quad v(\epsilon(ab-\epsilon^2)^n),\quad v(\epsilon^2b^{4n})$$
are distinct elements of $\gi$ since $\gamma$ is
not $n$-divisible. Hence the valuation of $F(a,b)$ is equal to
the minimum of the three distinct elements above.

We claim that for all $a, b \in \ca{O}_v$,
$v(F(a,b))<4n\gamma+\gamma$. Assume otherwise.
Then $v(a^{4n}) \geq 4n\gamma+\gamma$ and since
equality is not possible,
$$4nv(a)>4n\gamma+\gamma.$$
Also, $v(\epsilon(ab-\epsilon^2)^n) \geq 4n\gamma+\gamma$ and so
$v(a)+v(b) = 2\gamma$. Consequently,
$$4nv(b)=8n\gamma-4nv(a)$$
which gives $4nv(b)< 4n\gamma -\gamma$. Then
$$v(F(a,b))\leq v(\epsilon^2b^{4n})=2\gamma+4nv(b)< 4n\gamma + \gamma,$$
contradiction.

Since $\Gamma$ is dense and regular we can take a sequence
$\{4n\delta_{\rho}\}_{\rho<\lambda}$ in $\Gamma$ which
is cofinal in the interval
$(4n\gamma,4n\gamma+\gamma)$. For each $\rho<\lambda$, pick
$a_{\rho} \in \ca{O}_v$ with $v(a_{\rho})=\delta_{\rho}<2\gamma$ and let
$b_\rho:=\epsilon^2a_\rho^{-1} \in \ca{O}_v$. Then for all $\rho$
$$v(a_\rho^{4n})=4n\delta_\rho <4n\gamma+\gamma<
v(\epsilon^2b_\rho^{4n})=2\gamma+4n(2\gamma - \delta_\rho)$$
and hence $v(F(a_\rho,b_\rho))=4n\delta_\rho$.
Together with the previous claim, this shows that
$\{v(F(a,b)):a,b \in \ca{O}_v\}$ has no maximal element and so $\ca{K}$
is not extremal.
\end{proof}

\begin{proposition}                                   \label{exnreg}
Let $\ca{K}=(K,\Gamma, \bk, v)$ be an extremal valued field. Then for
each nonzero convex subgroup $\Delta$ of $\Gamma$, the quotient
group $\Gamma/\Delta$ is divisible.
\end{proposition}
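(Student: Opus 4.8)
The plan is to argue by contraposition: assuming $\Gamma/\Delta$ is not divisible, I will produce a single polynomial whose value set over $\ca{O}_v$ has no maximal element, contradicting extremality. Pick $n>1$ and $\gamma\in\Gamma$ with $\gamma>\Delta$ and $\gamma\notin n\Gamma+\Delta$ (so $\gamma+\Delta$ is a positive, non-$n$-divisible element of $\Gamma/\Delta$; in particular $\gamma\notin n\Gamma$ and $\gamma\notin\Delta$), and fix $\epsilon$ with $\v(\epsilon)=\gamma$. I will reuse the polynomial $F(x,y)=x^{4n}+\epsilon(xy-\epsilon^2)^n+\epsilon^2y^{4n}$ from Proposition~\ref{exreg}. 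The non-$n$-divisibility of $\gamma+\Delta$ shows, exactly as in~\ref{exreg} but now computing modulo $\Delta$, that the three monomials of $F$ take values in pairwise distinct cosets of $\Delta$ for all $a,b$; hence the coset of $\v(F(a,b))$ is that of the unique monomial lying in the lowest coset. As in~\ref{exreg}, $F$ has no zero on $\ca{O}_v$ and $\v(F(a,b))<(4n+1)\gamma$ for all $a,b\in\ca{O}_v$.

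The place where density of $\Gamma$ was used in~\ref{exreg} is replaced by the room inside the nonzero convex subgroup $\Delta$. For $\eta\in\Delta$ with $\eta\ge 0$, choose $a$ with $\v(a)=\gamma+\eta$ and set $b:=\epsilon^2a^{-1}$; then $a,b\in\ca{O}_v$, the middle monomial vanishes, and because $\gamma>\Delta$ forces $4n\eta<\gamma$ one computes $\v(F(a,b))=4n\gamma+4n\eta$. As $\eta$ runs through the nonnegative part of $\Delta$ — which has no largest element since $\Delta\ne 0$ — these values strictly increase inside the coset $4n\gamma+\Delta$ while staying below $(4n+1)\gamma$.

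Now suppose $\ca{K}$ were extremal. Then $S:=\{\v(F(a,b)):a,b\in\ca{O}_v\}$ has a maximum $M=\v(F(a_0,b_0))$, finite since $F$ has no zero on $\ca{O}_v$. Comparing $M$ with the family above forces $M$ to lie strictly above the entire coset $4n\gamma+\Delta$, that is, $M+\Delta>4n\gamma+\Delta$ in $\Gamma/\Delta$; in particular the variable carrying the minimal monomial then has value strictly above $\Delta$, which is exactly the room needed to perturb it within $\ca{O}_v$. By the coset-distinctness, exactly one monomial realizes the coset of $M$ and the other two sit in strictly higher cosets. If the minimal monomial is $x^{4n}$ or $\epsilon^2y^{4n}$, replace $(a_0,b_0)$ by $(a_0c,b_0c^{-1})$ (resp.\ $(a_0c^{-1},b_0c)$) with $\v(c)\in\Delta$ positive and small enough to stay in $\ca{O}_v$: this keeps $a_0b_0$, hence the middle monomial, unchanged, strictly raises the active monomial within its coset, and leaves the other two above it, yielding a value $>M$. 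If the minimal monomial is the middle one, put $b:=\epsilon^2a_0^{-1}$ to annihilate it; then $\v(F(a_0,b))$ is the minimum of the two remaining monomials, which lie in cosets strictly above that of $M$, so again the value exceeds $M$. Either way $M$ is not maximal, a contradiction, and therefore $\Gamma/\Delta$ is divisible.

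The main obstacle is precisely this last step: locating where the hypothetical maximum can sit. Since $\Gamma$ need not be dense, the cofinal-sequence device of~\ref{exreg} is unavailable, and the value set could a priori attain its maximum in a coset strictly above $4n\gamma+\Delta$. The resolution rests on combining two consequences of the non-$n$-divisibility of $\gamma+\Delta$: it makes the three monomial cosets pairwise distinct, so the minimum is isolated in a single coset, and it underlies the estimate $M+\Delta>4n\gamma+\Delta$, which supplies enough valuation to slide within $\ca{O}_v$. Checking in each of the three cases that the perturbed pair remains in $\ca{O}_v$ and that the intended monomial stays minimal is the delicate bookkeeping on which the argument turns.
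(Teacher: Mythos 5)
Your strategy is essentially the paper's: the same polynomial $F(x,y)=x^{4n}+\epsilon(xy-\epsilon^2)^n+\epsilon^2y^{4n}$, the same observation that non-$n$-divisibility of $\gamma+\Delta$ places the three monomial values in pairwise distinct cosets of $\Delta$, and the same device of improving a given value by perturbing with an element whose value is a positive element of $\Delta$, split into the same three cases. Your reorganization is legitimate and in one respect tidier: instead of the paper's global bound $v(F(a,b))<4n\gamma+\gamma+\Delta$ (its inequality (\ref{Delta1})), you use the explicitly attained cofinal family $v(F(a,b))=4n\gamma+4n\eta$, $\eta\in\Delta^{\geq 0}$, to force a hypothetical maximum $M$ to lie strictly above the coset $4n\gamma+\Delta$, and then you control the perturbations by coset-separation (the non-minimal monomials exceed the minimal one by more than $\Delta$, so shifts inside $\Delta$ cannot disturb which monomial is minimal).

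However, the step you yourself identify as the crux --- that the perturbed pairs remain in $\ca{O}_v$ --- is mis-justified, and as written this is a genuine gap. In your Case A (minimal monomial $x^{4n}$) the perturbation is $(a_0c,\,b_0c^{-1})$, so the coordinate that needs room is $b_0$, not the variable $a_0$ ``carrying the minimal monomial''; your stated reason $v(a_0)>\gamma+\Delta$ controls the wrong coordinate. Nor can you fall back on taking $v(c)$ ``small enough'': $\Delta$ is an arbitrary nonzero convex subgroup, so its positive part may have a least element (e.g.\ $\Delta\isom\Z$ inside a lexicographic product), and if all you knew were $v(b_0)\geq 0$ there might be no admissible $c$ at all. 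What rescues the argument --- and is absent from your write-up --- is the identity $v(a_0)+v(b_0)=2\gamma$: since $M>4n\gamma$ and $M\leq \gamma+nv(a_0b_0-\epsilon^2)$, one gets $v(a_0b_0-\epsilon^2)>2\gamma$ and hence $v(a_0b_0)=2\gamma$. Combined with the comparison $4nv(a_0)<2\gamma+4nv(b_0)$ this yields, in Case A, $v(a_0)<\bigl(1+\tfrac{1}{4n}\bigr)\gamma$ and therefore $v(b_0)>\bigl(1-\tfrac{1}{4n}\bigr)\gamma>\Delta$, so that \emph{every} positive $v(c)\in\Delta$ is admissible; symmetrically one needs $v(a_0)>\gamma>\Delta$ in Case C, and in Case B the same identity is what shows $v(\epsilon^2a_0^{-1})=v(b_0)\geq 0$, i.e.\ that your substitute $b$ lies in $\ca{O}_v$ at all. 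This identity is exactly the step ``$v(b)=2\gamma-v(a)$'' in the paper's proof; once you insert it, your argument closes, but without it the membership checks on which your contradiction rests are unsupported.
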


\begin{proof}
Let $\gamma \in \Gamma$ be such that the coset $\gamma + \Delta$ is not
divisible by $n$ in $\Gamma/\Delta$. Consider the polynomial (as
introduced in the previous theorem)
$$F(x,y)=x^{4n}+\epsilon(xy-\epsilon^2)^n+\epsilon^2y^{4n}$$
over $K$ where $v(\epsilon)=\gamma$.

The arguments in the proof of Proposition~\ref{exreg} can be applied to
the ordered group $\Gamma / \Delta$ to conclude that for all $a, b \in
\ca{O}_v$,
\begin{equation}                            \label{Delta1}
v(F(a,b))<4n\gamma+\gamma+\Delta\>.
\end{equation}

\medskip\noindent
{\bf Claim:} $\{v(F(a,b):a,b \in \ca{O}_v\}$ has no maximal element.
Take $a,b \in \ca{O}_v$ and set $\theta=v(F(a,b))$. We aim to find
$a',b' \in \ca{O}_v$ with $v(F(a',b'))>\theta$. Take any positive
$\delta\in\Delta$. From (\ref{Delta1}) it follows that
\begin{equation}                            \label{Delta2}
\theta +4n\delta < 4n\gamma+\gamma-4n\delta\>.
\end{equation}

Note that $v(F(\epsilon,\epsilon))=4n\gamma$, so we assume that
$4n\gamma<\theta$. As $v(\epsilon)=\gamma$ is not divisible by $n$,
$\theta$ is equal to the minimum of $4nv(a)$, $\gamma+nv(ab-\epsilon^2)$
and $2\gamma+4nv(b)$. Therefore,
$$4n\gamma < \theta \leq v(\epsilon(ab-\epsilon^2)^n)$$
and hence
$$v(b)=2\gamma - v(a)\>.$$

\smallskip\noindent
Assume that $\theta=4nv(a)$. Together with $4nv(a) < 4n\gamma+\gamma$,
(\ref{Delta2}) shows that
\begin{equation}                            \label{1}
4nv(a)+4n\delta< \gamma + 4n\gamma -4n\delta
< 2\gamma + 8n\gamma - 4nv(a)-4n\delta\>.
\end{equation}
Now take $a' \in \ca{O}_v$ with $v(a')=v(a)+\delta$ and let
$b'=\epsilon^2/a' \in \ca{O}_v$. Then $v(b')=2\gamma-v(a)-\delta$ and
(\ref{1}) implies that $4nv(a')<2\gamma+4nv(b')$. Therefore
$v(F(a',b'))=4nv(a')=4nv(a)+4n\delta>\theta$ as required.

\medskip\noindent
Next assume that $\theta=\gamma+nv(ab-\epsilon^2) \geq 4n\gamma$. Then
$\theta<4nv(a)$ and $\theta<2\gamma+4nv(b)$. With $b'=\epsilon^2/a \in
\ca{O}_v$, we have $v(b')=v(b)$ and therefore $v(F(a,b')) \geq \min
\{4nv(a), 2\gamma+4nv(b')\} > \theta$.

\medskip\noindent
It remains to consider the case $\theta=2\gamma+4nv(b)$.
Together with $2\gamma+4nv(b)=\theta< 4n\gamma+\gamma$,
(\ref{Delta2}) shows that
\begin{eqnarray*}
2\gamma+4nv(b)+4n\delta & < & 4n\gamma+\gamma-4n\delta +
(4n\gamma+\gamma - 2\gamma-4nv(b))-4n\delta\\
 & = & 8n\gamma - 4nv(b) - 4n\delta\>.
\end{eqnarray*}
Now take $b' \in \ca{O}_v$ with $v(b')=v(b)+\delta$ and let
$a'=\epsilon^2/b' \in \ca{O}_v$. Then $v(a')=2\gamma - v(b) - \delta$
and the above inquality implies that $2\gamma + 4nv(b')<4nv(a')$.
Therefore
$v(F(a',b'))=2\gamma+4nv(b)+4n\delta>\theta$
as required. This completes the proof of our claim and we conclude that
$\ca{K}$ is not extremal.
\end{proof}

\begin{theorem}                             \label{ndnzne}
Let $\ca{K}=(K,\Gamma,\bk;v)$ be an extremal valued field. Then
$\Gamma$ is divisible or a $\mathbb{Z}$-group.
\end{theorem}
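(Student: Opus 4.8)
The plan is to assemble the two preceding propositions with the structure theory of regular ordered groups; in fact almost all of the work has already been done, and this theorem is essentially a packaging step. The guiding observation is that Proposition~\ref{exnreg} delivers precisely the divisibility-of-quotients condition that Theorem~\ref{reg2} uses to characterize regularity, while Theorem~\ref{reg1} supplies a dichotomy whose nontrivial branch is exactly the hypothesis of Proposition~\ref{exreg}.

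First I would apply Proposition~\ref{exnreg}: since $\ca{K}$ is extremal, $\Gamma/\Delta$ is divisible for every nonzero convex subgroup $\Delta$ of $\Gamma$. By Theorem~\ref{reg2} this is equivalent to $\Gamma$ being regular. Then I would invoke Theorem~\ref{reg1}, which says a regular group is either a $\mathbb{Z}$-group or dense. If $\Gamma$ is a $\mathbb{Z}$-group we are done immediately. If instead $\Gamma$ is dense, then $\Gamma$ is dense and regular, so Proposition~\ref{exreg} applies verbatim (its hypotheses are exactly density, regularity, and extremality) and forces $\Gamma$ to be divisible. In either branch $\Gamma$ is a $\mathbb{Z}$-group or divisible, as claimed.

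I do not expect any genuine obstacle: the only point requiring care is that the hypotheses of the cited results align, and they do. Extremality of $\ca{K}$ feeds both propositions; Proposition~\ref{exnreg} together with Theorem~\ref{reg2} converts extremality into regularity; and the dense case of Theorem~\ref{reg1} coincides exactly with what Proposition~\ref{exreg} needs. The real difficulty of the theorem resides entirely in the polynomial constructions carried out in the proofs of Propositions~\ref{exreg} and~\ref{exnreg}, which have already been handled, so that nothing beyond this routine case split remains.
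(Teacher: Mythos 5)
Your proof is correct and follows exactly the paper's own argument: Proposition~\ref{exnreg} plus Theorem~\ref{reg2} yield regularity of $\Gamma$, and then Theorem~\ref{reg1} together with Proposition~\ref{exreg} settles the dense case. Nothing to add.
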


\begin{proof}
Proposition~\ref{exnreg} shows that $\Gamma$ has no nonzero convex
subgroup $\Delta$ such that $\Gamma/\Delta$ is not divisible. Therefore,
$\Gamma$ is regular by Theorem~\ref{reg2}. If $\Gamma$ is not a
$\mathbb{Z}$-group then, by Theorem~\ref{reg1}, $\Gamma$ is dense and
Proposition~\ref{exreg} yields that $\Gamma$ is divisible.
\end{proof}

\begin{proposition}                                   \label{exnlar}
Let $\ca{K}=(K,\Gamma,\bk;v)$ be an extremal valued field with
non-algebraically closed residue field $\bk$ and divisible value group
$\Gamma$. Then there is no polynomial $f(Y,Z)$ over $\ca{O}_v$ such
that
 \begin{enumerate}
  \item[(i)] there are $\alpha_1, \dots, \alpha_m
  \in \bk$ such that the equation $$\bar{f}(\alpha, z)=0$$ over $\bk$
  has no solution in $\bk$ whenever $\alpha \neq \alpha_1,\dots,\alpha_m$;
  \item[(ii)] for all $\epsilon \in \fr{m}_v$,
  $f(\epsilon, Z)=0$ has a solution in $\ca{O}_v$.
 \end{enumerate}
\end{proposition}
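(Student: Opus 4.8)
The plan is to argue by contradiction: assume a polynomial $f(Y,Z)$ over $\ca{O}_v$ satisfying (i) and (ii) exists, and manufacture from it a multi-variable polynomial $F$ whose value set $\{v(F(a_1,\dots,a_n)):a_i\in\ca{O}_v\}$ has no maximal element, contradicting extremality. First I would record the two regimes forced by (i) and (ii). If $y\in\ca{O}_v$ is a unit with $\bar y\notin\{\alpha_1,\dots,\alpha_m\}$, then for every $z\in\ca{O}_v$ we have $\overline{f(y,z)}=\bar f(\bar y,\bar z)\neq 0$ by (i), so $v(f(y,z))=0$; thus only the finitely many special residues $\alpha_i$, together with the residue $0$, can make $v(f(y,z))$ positive. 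Moreover, applying (ii) to $\epsilon\in\fr{m}_v$ shows that $\bar f(0,\cdot)=0$ is solvable in $\bk$, so $0$ is among the $\alpha_i$; and by (ii) every $\epsilon\in\fr{m}_v$ yields an exact zero $f(\epsilon,z_\epsilon)=0$ with $z_\epsilon\in\ca{O}_v$. Since $\ca{K}$ is henselian by Proposition~\ref{eam}, monic integral polynomials have their roots in $\ca{O}_v$ and simple residue roots lift.

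For the construction I would exploit that $\bk$ is not algebraically closed: fix a monic $h\in\ca{O}_v[T]$ whose reduction $\bar h$ is irreducible over $\bk$ of degree $d\ge 2$. Because $\ca{K}$ is henselian, any root of $h$ in $K$ would be integral and reduce to a root of $\bar h$ in $\bk$, which is impossible; hence $h$ has no root in $K$, and its homogenization $H(S,T)$ is anisotropic over $K$, i.e.\ $H(s,t)=0$ with $s,t\in K$ forces $s=t=0$. The idea is then to set, with a fresh variable $W$,
$$F(Y,Z,W)=H\big(f(Y,Z),\,YW-1\big),$$
so that an exact zero of $F$ requires simultaneously $f(Y,Z)=0$ and $YW-1=0$; the second equation forces $Y$ to be a unit with $W=Y^{-1}$, which excludes the solution-rich region $v(Y)>0$ supplied by (ii). To witness non-extremality I would then use (ii) together with the divisibility of $\Gamma$---exactly as density and regularity are used in the proof of Proposition~\ref{exreg}---to choose inputs along which the value of $F$ is finite and strictly increasing, cofinal up to the supremum of the value set, while arranging that this supremum is never attained.

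The main obstacle, and the delicate heart of the argument, is to reconcile the two opposing demands on $F$. On one hand (ii) produces exact zeros of $f$ for every $\epsilon\in\fr{m}_v$, so without the unit-forcing factor the value $\infty$ would be attained and extremality would not be violated; this is precisely why the factor $YW-1$ and the anisotropy of $H$ are needed to confine, and ultimately to forbid, exact zeros of $F$. On the other hand, confining $Y$ to units reintroduces the finitely many special residues $\alpha_i$, where $\bar f(\alpha_i,\cdot)=0$ has residue solutions that, if simple, lift by henselianity to genuine zeros of $F$. Ruling these out---or, where they cannot be ruled out, showing that the attainable values still climb cofinally toward a supremum that is not realized---is the crux, and it is here that the finiteness in (i) and the divisibility of $\Gamma$ must be combined. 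Once $F$ is shown to have no maximal value, the extremality of $\ca{K}$ is contradicted and the proposition follows.
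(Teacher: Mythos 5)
Your proposal is not a complete proof: the step you yourself call ``the crux'' is precisely where the argument must succeed, and with your choice of $F$ it cannot. The factor $YW-1$ forces $v(Y)=0$ whenever $v(YW-1)>0$, so every input on which your $F(Y,Z,W)=H(f(Y,Z),YW-1)$ could take a large value has $Y$ a unit whose residue lies in the finite set $\{\alpha_1,\dots,\alpha_m\}$. But hypotheses (i) and (ii) are completely silent about that region: (i) only controls residues \emph{outside} $\{\alpha_1,\dots,\alpha_m\}$, and (ii) only controls $Y\in\fr{m}_v$. Worse, (ii) becomes unusable in your construction: if $Y\in\fr{m}_v$ then $YW-1$ is a unit, and since $\bar h$ has no root in $\bk$ your form $H$ satisfies $v(H(s,t))>0\Rightarrow v(s),v(t)>0$; hence $v(F(Y,Z,W))=0$ on the entire regime where (ii) supplies exact zeros of $f$. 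So the climbing, non-attained values you need would have to come from units with residue some $\alpha_i$, where nothing in the hypotheses prevents the value set of $F$ from having a maximum, or even from containing $\infty$. No choice of sequences can repair this; the construction itself targets the wrong region.

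The paper's construction differs at exactly this point. It takes the anisotropic form (your $H$, of degree $n$, called $F$ there), composes it with itself, $G(X,Y,Z)=F(F(X,Y),Z)$, and couples a \emph{new} variable $X$ to $Y$ through the polynomial $XY(Y-a_1)\cdots(Y-a_m)-\epsilon$, where $a_i\in\ca{O}_v$ lifts $\alpha_i$ and $v(\epsilon)=\gamma>0$; the test polynomial is $H(X,Y,Z)=G\big(X,f(Y,Z),XY(Y-a_1)\cdots(Y-a_m)-\epsilon\big)$. If $v(H(a,b,c))\geq n^2\gamma$, the anisotropy of the composed form yields $v(a)\geq\gamma$, $v(f(b,c))\geq\gamma$ and $v(ab(b-a_1)\cdots(b-a_m)-\epsilon)\geq n\gamma$; the last condition forces $v(b)=v(b-a_i)=0$, i.e.\ $\bar b\notin\{\alpha_1,\dots,\alpha_m\}$, so (i) gives $v(f(b,c))=0$, a contradiction. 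Thus $n^2\gamma$ is a strict upper bound on the value set. For the cofinal values one takes $b_\rho\in\fr{m}_v$ with $v(b_\rho)=\gamma_\rho$ decreasing to $0$ (this is where density of the nontrivial divisible group $\Gamma$ enters), solves the coupling equation \emph{exactly} by setting $a_\rho:=\epsilon/\big(b_\rho(b_\rho-a_1)\cdots(b_\rho-a_m)\big)$, and uses (ii) to get $c_\rho$ with $f(b_\rho,c_\rho)=0$; then $H(a_\rho,b_\rho,c_\rho)=a_\rho^{n^2}$ has value $n^2v(a_\rho)$, which is cofinal in $(0,n^2\gamma)$. The double role of the coupling polynomial --- at high values it forbids the bad residues so that (i) caps the value, yet it is exactly solvable with $Y\in\fr{m}_v$ so that (ii) can be exploited --- is the idea missing from your plan.
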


\begin{proof}
Let $\beta_0,\dots,\beta_{n-1} \in \bk$ be such that
$x^n+\cdots + \beta_1 x+\beta_0=0$ has no solutions in $\bk$.
Consider the polynomial
$$F(X,Y)= X^n + \cdots + b_1 X Y^{n-1} + b_0 Y^n$$
over $\ca{O}_v$ where $\bar{b}_i=\beta_i$ and $b_i=0$ if $\beta_i=0$,
for $i=0,\dots,n-1$. Note that for all positive $\gamma \in \Gamma$,
$$v(F(a,b)) \geq \gamma \Longrightarrow v(a),v(b) \geq \gamma/n.$$
Let $G(X,Y,Z):=F(F(X,Y),Z)$. For positive $\gamma \in \Gamma$, if
$v(G(a,b,c)) \geq \gamma$ then $v(a), v(b) \geq \gamma/n^2$ and
$v(c) \geq \gamma/n$.

Let $a_1,\dots,a_m \in \ca{O}_v$ be such that $\bar{a}_j=\alpha_j$ and
$a_j=0$ if $\alpha_j=0$ (where $\alpha_j$ is as in (i) above) for
$j=1,\dots,m$. Consider the polynomial
$$H(X,Y,Z):=G\big(X, f(Y,Z), XY(Y-a_1)\cdots (Y-a_m)-\epsilon\big)$$
over $K$ where $v(\epsilon)=\gamma>0$. We claim that
$$v(H(a, b, c))<n^2\gamma$$
for all $a,b,c \in \ca{O}_v$. Otherwise, we would have
$$v(a) \geq \gamma \text{  and  } v\big(ab(b-a_1)\cdots
(b-a_m)-\epsilon\big) \geq n\gamma.$$
Therefore $v(ab(b-a_1)\cdots(b-a_m))=v(\epsilon)=\gamma$, which in
turn gives
$$v(a)=\gamma, \quad v(b)=0 \text{  and  } v(b-a_i)=0 \quad \text{for}
\quad i=1,\dots,m.$$
So $\bar{b} \neq \alpha_1, \dots, \alpha_m$ and
hence for all $z \in \ca{O}_v$, $\bar{f}(\bar{b},\bar{z})) \neq 0$.
Then $v(H(a,b,c))=v(f(b,c))=0$, contradiction, and we establish the claim.

\medskip\noindent
Let $\{\gamma_\rho\}_{\rho<\lambda}$ be a decreasing sequence such that
$\gamma_\rho \rightarrow 0$ and pick $b_\rho \in \fr{m}_v$ with
$v(b_\rho)=\gamma_\rho$ for all $\rho<\lambda$. Let $a_\rho$ be such
that $a_{\rho}b_\rho(b_\rho-a_1)\dots(b_\rho-a_m)=\epsilon$. Note that
$v(a_\rho) \rightarrow \gamma$, and in particular we may assume that
$a_\rho \in \ca{O}_v$ for all $\rho<\lambda$. Pick $c_\rho \in \ca{O}_v$
such that $f(b_\rho, c_\rho)=0$ for each $\rho<\lambda$. Then
$$v\big(H(a_\rho, b_\rho, c_\rho)\big)=n^2v(a_\rho)$$
and hence $\{v(H(a_\rho, b_\rho, c_\rho))\}$
is cofinal in $(0,n^2\gamma)$. We conclude that $\ca{K}$ is not extremal.
\end{proof}

\begin{theorem}                             \label{dnlne}
Suppose that $\ca{K}=(K,\Gamma,\bk; v)$ has a divisible value group.
If $\ca{K}$ is extremal, then $\bk$ is large.
\end{theorem}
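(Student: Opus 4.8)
The plan is to prove the contrapositive: assuming $\bk$ is not large, I will show that $\ca{K}$ is not extremal. Since algebraically closed fields are large, I may assume that $\bk$ is not algebraically closed, which is exactly the extra hypothesis needed to invoke Proposition~\ref{exnlar}. Thus it suffices to produce a single polynomial $f(Y,Z)$ over $\ca{O}_v$ satisfying conditions (i) and (ii) of that proposition; its existence then contradicts extremality, forcing $\bk$ to be large after all.

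To build $f$, I would exploit the characterization recalled in the Preliminaries: $\bk$ is large if and only if $\bk \ec \bk((t))$, equivalently (by the curve definition) every curve over $\bk$ with a smooth $\bk$-rational point has infinitely many of them. Non-largeness therefore furnishes a curve $C$ over $\bk$ possessing a smooth $\bk$-rational point but only finitely many such points. I would present $C$ as a plane curve $\bar f(Y,Z)=0$ and, after an affine coordinate change, arrange that its distinguished smooth point is $(0,z_0)$ with $\partial_Z \bar f(0,z_0)\neq 0$, i.e. $z_0$ is a simple root of $\bar f(0,Z)$. Lifting the coefficients of $\bar f$ and the element $z_0$ to $\ca{O}_v$ yields $f$ and a lift $z_0'$. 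Condition (i) is then immediate: the finitely many $\bk$-rational points of $C$ have only finitely many $Y$-coordinates $\alpha_1,\dots,\alpha_m$, so for $\alpha\neq\alpha_1,\dots,\alpha_m$ the equation $\bar f(\alpha,z)=0$ has no solution in $\bk$. For condition (ii), note that $\ca{K}$ is henselian by Proposition~\ref{eam}; for every $\epsilon\in\fr{m}_v$ we have $\overline{f(\epsilon,z_0')}=\bar f(0,z_0)=0$ while $\overline{\partial_Z f(\epsilon,z_0')}=\partial_Z\bar f(0,z_0)\neq 0$, so Hensel's Lemma provides a root $Z\in\ca{O}_v$ of $f(\epsilon,Z)=0$. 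Hence $f$ satisfies both (i) and (ii), contradicting Proposition~\ref{exnlar}.

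The main obstacle is the middle step: turning the abstract failure of largeness into an explicit plane equation $\bar f(Y,Z)=0$ whose fibre over $Y=0$ carries a simple $Z$-root while the full curve still has only finitely many $\bk$-rational points. Passing from an arbitrary witnessing curve to a plane model can introduce singularities and can disturb the point count, and since $\bk$ is not assumed perfect the usual primitive-element and separability arguments that produce a good plane model (a separating variable $Y$ serving as a uniformizer at the chosen smooth point) require care. I expect to handle this by taking $Y$ to be a uniformizer at the smooth point and $Z$ a function regular there for which the projection is unramified over $Y=0$, and by checking that the birational modification alters only finitely many $\bk$-rational points. Note finally that the Hensel-lifting argument for (ii) is uniform in $\epsilon$ and uses neither divisibility nor density of $\Gamma$; divisibility of $\Gamma$ enters only through the hypotheses of Proposition~\ref{exnlar}.
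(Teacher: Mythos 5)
Your proposal is correct and takes essentially the same route as the paper's own proof: contrapositive via Proposition~\ref{exnlar}, passing from a curve witnessing non-largeness to a plane model $\bar f(Y,Z)=0$ with a smooth $\bk$-point in the fibre over $Y=0$, lifting the coefficients to $\ca{O}_v$, verifying (i) by finiteness of the $Y$-coordinates and (ii) by Hensel's Lemma uniformly in $\epsilon$. The plane-model reduction you single out as the main obstacle is precisely the step the paper dispatches with ``by the theory of algebraic curves'' (its conditions (i)$'$ and (ii)$'$), and your sketch of it (uniformizer $Y$ at the smooth point, projection unramified over $Y=0$, birational modification affecting only finitely many rational points) is the standard way to justify it.
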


\begin{proof} Since $\bk$ is not large, there is a curve $C$ defined
over $\bk$ which has a smooth $\bk$-rational point, but has only
finitely many $\bk$-rational points. But then by the theory of algebraic
curves, the curve $C$ is birational to a curve in the affine $\bk$-plane
$C_h=V(h)\subset{\bf A}^2_{\bk}$, where $h=h(Y,Z)\in\bk[Y,Z]$ is a
polynomial in the variables $Y,Z$ satisfying the following:

\begin{enumerate}
\item[(i)$'$] $C_h(\bk)=\{(\alpha,\beta)\in\bk^2\mid h(\alpha,\beta)=0\}$
is finite.
\smallskip\noindent
\item[(ii)$'$] $h(0,0)=0$ and $\partial h/\partial Z(0,0)\neq0$, hence
in particular, $(0,0)$ is a smooth $k$-rational point of $C_h$. (Actually
these conditions imply that $h(Y,Z)$ has the form
$h(Y,Z)=\gamma_{10}Y+\gamma_{01}Z+(\hbox{non-linear terms})$
with $\gamma_{01}\neq0$.)
\end{enumerate}

\smallskip\noindent
Setting $h(Y,Z)=\sum_{i,j}\gamma_{ij}Y^iZ^j$, let
$f(Y,Z)=\sum_{i,j}c_{ij}Y^iZ^j\in\ca{O}[Y,Z]$ be a preimage
of $h(Y,Z)$ such that $c_{ij}=0$ if $\gamma_{ij}=0$, hence
$c_{ij}\in\ca{O}^\times$ if $\gamma_{ij}\neq 0$. Finally, let
$\{\alpha_1=0,\dots,\alpha_m\}$ be the set of all the $Y$-coordinates
of the points $(\alpha,\beta)\in C_h(\bk)$. We claim that
$f(Y,Z)\in\ca{O}[Y,Z]$ satisfies the conditions~(i),~(ii)
from Proposition~\ref{exnlar}. Indeed, condition~(i) is obviously
satisfied, because $\overline f(Y,Z)=h(Y,Z)$, and $h(Y,Z)$
satisfies condition~(i)$'$. For condition~(ii), proceed as
follows: For $\epsilon\in\fr{m}_v$ set
$f_\epsilon(Z):=f(\epsilon,Z)\in\ca{O}[Z]$. Then
$\overline f_\epsilon(Z)=h(0,Z)$, hence $Z=0$ is a simple
zero of $\overline f_\epsilon(Z)=h(0,Z)$, by condition~(ii)$'$.
Therefore, by Hensel's Lemma there exists a unique
$\eta\in \fr{m}_v$ such that $f_\epsilon(\eta)=0$, hence
equivalently, $f(\epsilon,\eta)=f_\epsilon(\eta)=0$.

\smallskip\noindent
By Proposition~\ref{exnlar} we conclude that $\ca{K}$ is not extremal.
\end{proof}

The results proved so far enable us to give a version of Proposition~1
of \cite{ershovext} for the modified notion of ``extremal field''.

\begin{definition}
We say that a basis $b_1,\ldots,b_n$ of a valued field extension
$(K,\Gamma, \bk; v)\subset (L,\Delta, \bl; w)$ is a {\em valuation
basis} if for all choices of $c_1,\ldots,c_n\in K$,
\[w\sum_{i=1}^{n}c_ib_i\;=\;\min_i wc_ib_i\>.\]
Note that every finite defectless extension admits a valuation basis.
\end{definition}

\begin{lemma}
Every finite defectless extension of an extremal field is again an
extremal field.
\end{lemma}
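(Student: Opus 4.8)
The plan is to show that if $(L,\Delta,\bl;w)$ is a finite defectless extension of an extremal field $\ca{K}=(K,\Gamma,\bk;v)$, then $L$ satisfies the defining property of extremality: every multi-variable polynomial $F$ over $L$ attains a maximal value on $\ca{O}_w^n$. The central idea is to use the valuation basis $b_1,\dots,b_N$ of $L/K$ (which exists because the extension is defectless) to convert the problem over $L$ into a problem over $K$, where extremality is available by hypothesis.

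First I would fix a valuation basis $b_1,\dots,b_N$ of $L/K$, so that for any $L$-linear combination one has $w\bigl(\sum_j c_j b_j\bigr)=\min_j w(c_j b_j)$ for $c_j\in K$. The key structural observation is that $\Delta$ is generated by $\Gamma$ together with the finitely many values $w(b_1),\dots,w(b_N)$; writing $\Delta=\bigsqcup_j \bigl(\Gamma+w(b_j)\bigr)$ as a union of cosets (taking the $b_j$ to represent the distinct cosets of $\Gamma$ in $\Delta$ among those occurring), I can read off $w$ of an element from the minimum of the $\Gamma$-parts in each coset. Thus, to express $w(F(\cdot))$ in terms of data over $K$, I would write each variable $X_i$ of $F$ in coordinates relative to the basis, substituting $X_i=\sum_j x_{ij}b_j$ with new variables $x_{ij}$ ranging over $\ca{O}_v$ (after checking that $a\in\ca{O}_w$ corresponds to the coordinate vector lying in an appropriate bounded region, which one arranges by scaling the $b_j$ to have values in a fundamental domain).

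Next I would expand $F\bigl(\sum_j x_{1j}b_j,\dots,\sum_j x_{nj}b_j\bigr)$ and regroup it by the cosets of $\Gamma$ in $\Delta$: the expansion is an $L$-combination of monomials in the $x_{ij}$, and collecting the terms lying in each fixed coset $\Gamma+\delta_k$ yields finitely many polynomials $F_k$ with coefficients in $K$ in the variables $x_{ij}$. Since the values in distinct cosets are incomparable modulo $\Gamma$, one has $w(F)=\min_k\bigl(\delta_k+v(F_k(\underline{x}))\bigr)$, so the set of values $w(F(\ca{O}_w^n))$ is determined by the finitely many sets $\{v(F_k(\underline{x})):\underline{x}\in\ca{O}_v^{nN}\}$. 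Each of these has a maximal element by the extremality of $\ca{K}$, and the minimum over finitely many cosets of (coset shift plus a set with a maximum) again has a maximum in $\Delta_\infty$; this yields the desired maximal element for $F$ over $L$.

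I expect the main obstacle to be the bookkeeping that makes the coordinate substitution genuinely equivalent to the original ranging over $\ca{O}_w$. The subtle point is that requiring $a=\sum_j x_{ij}b_j\in\ca{O}_w$ does \emph{not} correspond exactly to each coordinate $x_{ij}$ lying in $\ca{O}_v$; rather, after normalizing the basis so that the $w(b_j)$ lie in a fixed set of coset representatives in $[0,\text{something})$, the condition $w(a)\ge 0$ forces $v(x_{ij})\ge -w(b_j)$, i.e.\ the coordinates range over a shifted copy $t_j^{-1}\ca{O}_v$ rather than $\ca{O}_v$ itself. Handling this cleanly—either by absorbing the shifts $b_j$ back into the coefficients so that the effective variables do range over $\ca{O}_v$, or by replacing $F_k$ with a rescaled polynomial whose variables genuinely run over the valuation ring—is where the argument must be carried out carefully; once the reduction to finitely many single-valued-field extremality instances is set up correctly, the conclusion follows formally.
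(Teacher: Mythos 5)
Your reduction setup is essentially the paper's: a valuation basis $b_1,\dots,b_m$ whose values are normalized to be $0$ or to lie below every positive element of $\Gamma$, the substitution $X_i=\sum_j b_j Z_{ij}$ with the new variables ranging over $\ca{O}_v$, and the decomposition of the resulting polynomial as $\sum_k b_k G_k(\overline{Z})$ with $G_k\in K[\overline{Z}]$, which by the valuation basis property gives
\[
w\Bigl(F\bigl(\textstyle\sum_j b_j z_{1j},\dots\bigr)\Bigr)\;=\;\min_k\bigl(w(b_k)+v(G_k(\overline{z}))\bigr).
\]
Two remarks on this part. First, the normalization of the $w(b_j)$ is not something one can simply ``arrange by scaling'' over an arbitrary value group; the paper obtains it from Theorem~\ref{ndnzne}, i.e., from the previously established fact that extremality forces $\Gamma$ to be divisible or a $\mathbb{Z}$-group. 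Second, when $[\bl:\bk]>1$ the basis elements cannot represent pairwise distinct cosets of $\Gamma$ in $\Delta$, so the displayed formula comes from the valuation basis property itself, not from ``incomparability of distinct cosets''. Both points are repairable.

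The genuine gap is your final step. Extremality of $\ca{K}$ gives, for each $k$ \emph{separately}, a maximum of the set $S_k=\{v(G_k(\overline{z})):\overline{z}\in\ca{O}_v^{nm}\}$; but these maxima may be attained at different tuples $\overline{z}$, while the quantity to be maximized is the \emph{pointwise} minimum $\min_k\bigl(w(b_k)+v(G_k(\overline{z}))\bigr)$ as $\overline{z}$ varies. The supremum of a pointwise min is not the min of the suprema: it is perfectly consistent with each $S_k$ having a maximum (say $\infty$, because each $G_k$ has a zero) that the pointwise min increases without bound along a sequence of tuples and never attains its supremum (the $G_k$ having no \emph{common} zero). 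Extremality, being a statement about one polynomial at a time, does not formally exclude this; excluding it is essentially the content of the lemma itself. This is exactly where the paper brings in its key device, the norm form: setting $h(\overline{Y})=N_{L(\overline{Y})|K(\overline{Y})}\bigl(\sum_i b_iY_i\bigr)$, one forms the \emph{single} polynomial $H(\overline{Z})=h\bigl(G_1(\overline{Z}),\dots,G_m(\overline{Z})\bigr)=N_{L(\overline{Y})|K(\overline{Y})}(G(\overline{Z}))\in K[\overline{Z}]$ and uses the identity $vN_{L|K}(a)=m\,w(a)$ for $a\in L$ (valid since $K$ is henselian, by Proposition~\ref{eam}) to get $wF(\overline{d})=\frac{1}{m}\,vH(\overline{c})$ whenever $\overline{d}$ has coordinate tuple $\overline{c}$. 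Extremality of $\ca{K}$ applied to the one polynomial $H$ then maximizes $wF$ directly, and no minimum over $k$ ever has to be analyzed. Your argument is missing this idea (or some equivalent device encoding $\min_k$ of several polynomial values over $K$ as the value of a single polynomial over $K$), and without it the concluding inference does not go through.
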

\begin{proof}
Take an extremal field $\ca{K}=(K,\Gamma, \bk; v)$ and a finite defectless
extension $(L,\Delta, \bl; w)$ of degree $m$; we wish to show that the
latter is an extremal field. From Theorem~\ref{ndnzne} we know that
$\Gamma$ is divisible or a $\mathbb{Z}$-group. Therefore, all cosets of
$\Gamma$ in $\Delta$ admit representatives that are either $0$ or lie
between $0$ and the smallest positive element of $\Gamma$. Consequently,
we can choose a valuation basis $b_1,\ldots,b_m$ of the extension such
that the values $wb_i$ have the same property. Write
$\overline{Y}=(Y_1,\ldots,Y_m)$ and take
\[h(\overline{Y})\>=\>N_{L(\overline{Y})|K(\overline{Y})}
\left(\sum_{i=1}^{m} b_i Y_i\right)\]
be the norm form with respect to the basis $b_1,\ldots,b_m$ of the
extension $L(\overline{Y})|K(\overline{Y})$.

Take a polynomial $F(X_1, \dots , X_n)$ over $L$; we wish to show that
the set
\[\{w(F(a_1,\dots,a_n)): a_1,\dots a_n \in \ca{O}_w\}\]
has a maximal element. Denote by $G(\overline{Z})$ the
polynomial obtained from $F$ by substituting $\sum_{i=1}^{m} b_i Z_{ij}$
for $X_i$, $1\leq i\leq n$. The polynomial $G(\overline{Z })$
can be written as $G(\overline{Z})=\sum_{i=1}^{m} b_i
G_i (\overline{Z})$ with $G_i (\overline{Z})\in
K[\overline{Z}]$ for every $i$. Now let
\[H(\overline{Z})\>=\>h(G_1 (\overline{Z}),\ldots,
G_m (\overline{Z}))\>=\>N_{L(\overline{Y})|K(\overline{Y})}
(G(\overline{Z}))\in K[\overline{Z}]\;\>.\]
Since $\ca{K}$ is extremal, there exist elements $c_{ij}\in\ca{O}_v$,
$1\leq i\leq m$, $1\leq j\leq n$, such that
\[vH(\overline{c})\>=\>\max\{wH(\overline{c'})\mid c'_{ij}\in
\ca{O}_v\}\>.\]
For $1\leq j\leq n$, we set $d_j=\sum_{i=1}^{m} b_i c_{ij}$. We wish to
show that
\[wF(\overline{d})\>=\>\max\{wF(\overline{d'})\mid d'_j\in
\ca{O}_w\}\>.\]
Note that $vN_{L|K}(a)=mwa$ for every $a\in L$ since $\ca{K}$ is
henselian by Proposition~\ref{eam}. Take $d'_j\in\ca{O}_w$ and write
$d'_j=\sum_{i=1}^{m} b_i c'_{ij}$ with $c'_{ij}\in K$. Since the $b_i$
form a valuation basis,
\[0\>\leq\>vd'_j\>=\>w\sum_{i=1}^{m} b_ic'_{ij}\>=\>
\min_i wb_ic'_{ij}\>.\]
Hence for $1\leq i\leq m$ and $1\leq j\leq n$, $wb_i+vc'_{ij}\geq 0$. By
our assumptions on the values $wb_i$, this implies that $c'_{ij}\in
\ca{O}_v$. Now we compute
\begin{eqnarray*}
wF(\overline{d'}) & = & \frac{1}{m}\,vN_{L|K}(F(\overline{d'}))
\>=\>\frac{1}{m}\,vH(\overline{c'})\\
 & \leq & \frac{1}{m}\,vH(\overline{c}) \>=\>
\frac{1}{m}\,vN_{L|K}(F(\overline{d})) \>=\> wF(\overline{d})\>.
\end{eqnarray*}

\end{proof}

This lemma yields:
\begin{theorem}                             \label{eac}
a) \ Every extremal valued field is algebraically complete.
\par\smallskip\noindent
b) \ Every finite extension of an extremal field is again an extremal
field.
\end{theorem}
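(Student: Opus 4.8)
The plan is to prove both parts at once, by showing that \emph{every} finite extension of an extremal field is defectless; granting this, part~a) is immediate because an extremal field is henselian by Proposition~\ref{eam}, and henselian together with defectless is exactly the definition of algebraically complete, while part~b) follows by feeding defectlessness into the preceding Lemma. I would prove the defectlessness statement by strong induction on the degree $n=[L:K]$, quantified simultaneously over all extremal fields $\ca{K}$ and all their finite extensions $(L,\Delta,\bl;w)$ of degree $n$.

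In the inductive step I distinguish whether $L|K$ admits a proper intermediate field. In the \emph{reducible} case, choose $K\subsetneq M\subsetneq L$. Since $[M:K]<n$, the induction hypothesis makes $M|K$ defectless, so $M$ is extremal by the preceding Lemma, whose hypothesis is precisely the defectlessness of a finite extension of an extremal field. Applying the induction hypothesis once more, now to the extremal field $M$ and the extension $L|M$ of degree $[L:M]<n$, shows that $L|M$ is defectless; by multiplicativity of the defect in towers, $L|K$ is defectless as well. This half is routine, and it is exactly here that the Lemma is needed, which is why the two parts must be proved together rather than a) strictly before b).

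The substantive case is when $L|K$ has \emph{no} proper intermediate field. Here I would use that $\ca{K}$ is algebraically maximal (Proposition~\ref{eam}): relying on henselianity and the elementary behaviour of $(\Delta:\Gamma)$ and $[\bl:\bk]$, such an extension ought to be either defectless or immediate, and a proper immediate algebraic extension is excluded by algebraic maximality. I expect this to be the main obstacle. In positive residue characteristic the wild part of the extension---inseparable growth of $\bl$ over $\bk$ and wild ramification in $\Delta/\Gamma$---makes the dichotomy ``defectless or immediate'' delicate, since one must rule out an extension with no intermediate field whose ramification or residue growth is proper but not full.

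Should the structural route prove awkward, the fallback I would pursue is to contradict extremality directly. Transporting a hypothetical defect of $L|K$ through the norm form $N_{L|K}$, exactly as in the computation in the proof of the preceding Lemma, I would manufacture a single polynomial over $K$ whose value set on $\ca{O}_v$ is cofinal below a bound yet attains no maximum, in the spirit of the cofinal-sequence constructions of Propositions~\ref{exreg} and~\ref{exnlar}; this would contradict the extremality of $\ca{K}$ and force $L|K$ to be defectless. Either way, the induction then delivers that every finite extension of an extremal field is defectless and, through the Lemma, extremal, which is part~b), while part~a) is the combination of this defectlessness for $\ca{K}$ with its henselianity.
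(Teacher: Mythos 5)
Your overall architecture --- reducing both parts to the single statement that every finite extension of an extremal field is defectless, then invoking the preceding Lemma --- matches the paper's, and your ``reducible'' case (splitting off a proper intermediate field, using the Lemma to retain extremality, and multiplicativity of the defect) is correct. The genuine gap is your ``irreducible'' case, and it is not a technicality that can be pushed through as stated: the dichotomy ``immediate or defectless'' that you hope for is a theorem only for extensions of \emph{prime} degree. There, by the Lemma of Ostrowski, $[L:K]=d\cdot(\Delta:\Gamma)\cdot[\bl:\bk]$ with the defect $d$ a power of the residue characteristic $p$, so degree $p$ forces $d=1$ or $d=p$, and $d=p$ is exactly the immediate case, which algebraic maximality excludes. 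But a minimal extension need not have prime degree (any primitive Galois group, e.g.\ $A_4$ acting on $4$ points, yields a separable extension of composite degree with no proper intermediate field), and for $[L:K]=p^2$ nothing in the fundamental equality rules out $d=p$, $(\Delta:\Gamma)=p$, $[\bl:\bk]=1$, which is neither immediate nor defectless. So algebraic maximality of $\ca{K}$ alone cannot close this case, and your induction on the degree cannot escape it by passing to the normal hull, since the normal hull has larger degree than $L$. The norm-form fallback remains a hope rather than an argument; no construction is given, and the paper does not proceed that way either.

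What the paper does instead (quoting Corollary~2.10 of \cite{kuhlmann}, with a sketch) is to replace induction on the degree by ramification theory inside the normal hull. Given a finite extension of the extremal (hence, by Proposition~\ref{eam}, algebraically maximal and henselian) field $\ca{K}$, it passes to the normal hull $\ca{N}$, takes the maximal separable subextension $\ca{N}'$ and the ramification field $\ca{R}$ of the Galois extension $\ca{N}'|\ca{K}$. Ramification theory gives that $\ca{R}|\ca{K}$ is defectless, and above $\ca{R}$ the extension $\ca{N}$ is a tower of (separable or purely inseparable) extensions of degree $p$. One then climbs this tower: if $\ca{L}'$ is a maximal field in the tower with $\ca{L}'|\ca{K}$ defectless, then $\ca{L}'$ is extremal by the preceding Lemma, hence algebraically maximal, so the next degree-$p$ step cannot be immediate, hence is defectless by the prime-degree dichotomy; multiplicativity of the defect then contradicts the maximality of $\ca{L}'$ unless $\ca{L}'=\ca{N}$. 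The two inputs your proposal is missing are precisely these ramification-theoretic facts --- defectlessness of $\ca{R}|\ca{K}$ and the reduction of the wild part to prime-degree steps --- which is what allows the known dichotomy to apply exactly where your minimal-extension case gets stuck.
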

\begin{proof}
a): \ By Proposition~\ref{eam}, every extremal field is algebraically
maximal. Hence our assertion follows from the preceding lemma once we
know that a valued field is algebraically complete if each of its finite
defectless extensions is algebraically maximal. This holds by
Corollary~2.10 of \cite{kuhlmann}. For the convenience of the reader, we
give a sketch of the proof. Every finite extension of an algebraically
complete field is again algebraically complete, hence algebraically
maximal. For the converse, take a valued field $\ca{K}$ such that every
finite extension is algebraically maximal. Then $\ca{K}$ is itself
algebraically maximal, hence henselian. Take a finite extension $\ca{L}$
of $\ca{K}$. In order to show that it is defectless, it suffices to show
that its normal hull $\ca{N}$ is a defectless extension of $\ca{K}$.
Denote by $\ca{N}'$ the maximal separable subextension of
$\ca{N}|\ca{K}$, and by $\ca{R}$ the ramification field of this Galois
extension. Then $\ca{R}|\ca{K}$ is defectless. On the other hand,
$\ca{N}'|\ca{K}$ is a $p$-extension, where $p$ is the characteristic of
the residue field. Consequently, $\ca{N}|\ca{K}$ is a tower of
(separable or purely inseparable) extensions of degree $p$. As
$\ca{R}|\ca{K}$ is defectless, there is a maximal field $\ca{L}'$ in the
tower such that $\ca{L}'|\ca{K}$ is defectless. By assumption, $\ca{L}'$
is algebraically maximal. If $\ca{L}'\ne \ca{N}$, then the next larger
field $\ca{L}''$ in the tower is an extension of degree $p$ of $\ca{L}'$
and it is not immediate, hence defectless. By multiplicativity of the
defect, we find that $\ca{L}''|\ca{K}$ is defectless, contradicting the
maximality of $\ca{L}'$.

\par\smallskip\noindent
b): \ By part a), every finite extension of an extremal field is
defectless. Hence the assertion of part b) follows from the preceding
lemma.
\end{proof}
\end{section}

\begin{section}{Characterization of extremal fields} \label{sectchar}

The first non-algebraically closed examples of extremal fields were
provided by Proposition 2 in \cite{ershovext}:

\begin{theorem}                                   \label{zgroup}
Let $\ca{K}=(K,\Gamma, \bk; v)$ be an algebraically complete
valued field with $\Gamma\isom\mathbb{Z}$. Then $\ca{K}$ is extremal.
\end{theorem}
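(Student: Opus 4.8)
The plan is to exploit that $\Gamma\isom\Z$ forces the value set $S_F:=\{v(F(a_1,\dots,a_n)):a_i\in\ca{O}_v\}\subseteq\gi$ to live in $\Z\cup\{\infty\}$. Such a set has a maximum precisely when either $\infty\in S_F$ or $S_F\cap\Z$ is bounded above, so the whole statement reduces to a single implication: \emph{if $v(F(\bar a^{(k)}))\to\infty$ for some sequence $\bar a^{(k)}\in\ca{O}_v^{\,n}$, then $F$ has a zero in $\ca{O}_v^{\,n}$.} After multiplying $F$ by a power of a uniformizer I may assume $F\in\ca{O}_v[\bar X]$, and since both $v(F)\to\infty$ and the existence of a zero depend only on the reduced part of $F$, I may also assume $F$ reduced.

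First I would dispose of the generic situation by Hensel's Lemma, using that $\ca{K}$ is henselian (Proposition~\ref{eam}). If along the sequence some partial derivative stays bounded, say $v(\partial_i F(\bar a^{(k)}))\le M$ for infinitely many $k$, then eventually $v(F(\bar a^{(k)}))>2M\ge 2\,v(\partial_i F(\bar a^{(k)}))$, and applying the one-variable Hensel's Lemma in the single variable $X_i$ (freezing the other coordinates) produces an exact zero of $F$ in $\ca{O}_v^{\,n}$. This settles every case except the one in which \emph{all} first partials blow up simultaneously, i.e.\ $v(\partial_i F(\bar a^{(k)}))\to\infty$ for every $i$.

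The remaining case is the heart of the matter. Here $\bar a^{(k)}$ approximates, to higher and higher order, the singular locus $\Sigma=V(F,\partial_1F,\dots,\partial_nF)$, which for reduced $F$ has dimension at most $n-2$. I would resolve it by translating to $\bar a^{(k)}$, passing to the initial form of $F(\bar a^{(k)}+\bar Y)$ with respect to the Newton data of its coefficients, and lifting by Hensel as soon as a non-degenerate initial form appears, the degeneracy being removed after finitely many such reductions (an induction on $\deg F$). An alternative is to pass to the completion $\hat K$, which is a complete discretely valued field, hence maximal and thus spherically complete: there the approximate zeros can be upgraded to an exact one, after which I would descend the zero back to $K$ using that $\ca{K}$ is dense in $\hat K$ and algebraically maximal (again Proposition~\ref{eam}).

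The main obstacle is exactly this singular case. The naive induction — replace $F$ by a partial derivative of smaller degree — fails, because a zero of $\partial_iF$ is not a zero of $F$; and over a residue field that need not be algebraically closed one cannot collapse the gradient system $F=\partial_1F=\dots=\partial_nF=0$ to a single polynomial without enlarging its zero set. One must therefore either carry out the Newton-polygon reduction carefully enough that each step strictly simplifies $F$, or invoke spherical completeness of $\hat K$ together with a genuine descent argument; in positive characteristic the reduction to the reduced polynomial and the use of derivatives additionally require separating the inseparable contributions, which is an extra wrinkle to watch.
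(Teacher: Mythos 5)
Your opening reduction (``value set unbounded in $\Z$ $\Rightarrow$ must produce a zero in $\ca{O}_v^n$'') is correct, and so is the easy case: if some partial derivative stays of bounded value along the sequence, one-variable Hensel's Lemma yields an exact zero. But the remaining case, which you rightly call the heart of the matter, is not proved, and no argument of the shape you propose can succeed, because you never use the hypothesis that $\ca{K}$ is algebraically complete (defectless) --- only henselianity. The theorem is false for merely henselian fields. Concretely, take $u\in\mathbb{F}_p[[t]]$ transcendental over $\mathbb{F}_p(t)$ and let $K=\mathbb{F}_p(t,u^p)^h\subset\mathbb{F}_p((t))$ be the henselization inside the Laurent series field. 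Then $K$ is henselian with value group $\Z$, and $F(X)=X^p-u^p$ is irreducible (hence reduced) over $K$; since $\mathbb{F}_p[t]\subset K$ is dense in $\mathbb{F}_p[[t]]$, we have $v(F(b))=p\,v(b-u)$ unbounded as $b$ ranges over $\ca{O}_v$, yet $F$ has no zero in $K$ because its unique root $u$ lies outside $K$. This one example breaks every ingredient of your singular case: the gradient of $F$ vanishes identically even though $F$ is reduced, so your claim that the singular locus of a reduced $F$ has dimension at most $n-2$ (and with it the whole gradient dichotomy) fails in equal characteristic $p$; and the completion $\hat K=\mathbb{F}_p((t))$ does contain a zero, but it cannot be descended to $K$ --- density of $K$ in $\hat K$ only hands you back approximate zeros, which is what you started with. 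Consistently with this, the paper's Theorem~\ref{eac} shows extremal fields must be algebraically complete, so defectlessness has to enter any correct proof; in your outline it never does.

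Beyond that structural problem, each of your two proposed routes has an unjustified key step even where it could in principle work. The claim that spherical completeness of $\hat K$ lets one ``upgrade approximate zeros to exact ones'' is not a formal consequence of completeness in several variables: it is the approximation property of complete discrete valuation rings (Greenberg's theorem), a substantial result, and your Newton-polygon/initial-form induction on $\deg F$ is a sketch of exactly this difficulty rather than a proof of it. The descent step would require $K\ec\hat K$ (existential closedness of $K$ in its completion), which for equal characteristic $p$ is a deep theorem whose proof is precisely where algebraic completeness is consumed; ``dense and algebraically maximal'' does not do it. For comparison: the paper itself gives no new proof of this statement --- it observes that Ershov's original argument in \cite{ershovext} remains valid for the corrected definition of extremality --- so your proposal stands or falls on its own, and as written it falls in the positive characteristic case, which is essential (it is what makes $\mathbb{F}_p((t))$ extremal).
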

By Remark~\ref{ershovw}, this theorem would be false with the
definition of extremality as stated in \cite{ershovext}. However, it is
easy to check that the proof of Theorem~\ref{zgroup} given in
\cite{ershovext} is valid with the revised definition of extremality.

Note that extremality is a first order condition for valued fields.
Hence we can apply the following Ax-Kochen-Ershov Principle for tame
valued fields proved in \cite{kuhlmann2}, to show extremality for larger
classes of valued fields:

\begin{theorem}                             \label{AKE}
Let $\ca{K}=(K,\Gamma, \bk; v)$ and $\ca{L}=(L,\Delta, \bl; w)$ be two
perfect algebraically complete valued fields with $\cha{\bk}=\cha{K}$
and $\cha{\bl}=\cha{L}$.
If $\bk\equiv\bl$ and $\Gamma\equiv\Delta$, then $\ca{K}\equiv\ca{L}$.
If in addition $\ca{L}$ is an extension of $\ca{K}$, and if
$\bk\prec\bl$ and $\Gamma\prec\Delta$, then $\ca{K}\prec\ca{L}$. The
same holds with ``$\ec$'' in the place of ``$\prec$'' as soon as $\ca{K}$
is perfect and algebraically complete with $\cha{\bk}=\cha{K}$.
\end{theorem}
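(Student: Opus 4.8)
The plan is to reduce all three assertions to a single \emph{relative embedding property} for the class of fields under consideration --- which are precisely the perfect, equal-characteristic tame fields --- and then to feed this property into the standard saturation and sandwich machinery of model theory. First I would isolate the following embedding lemma: if $\ca{K}$ is perfect and algebraically complete with $\cha\bk=\cha K$, if $\ca{L}$ has the same properties and is $|K|^{+}$-saturated, and if $\ca{F}$ is a common valued subfield together with compatible embeddings $\bk\hookrightarrow\bl$ over the residue field of $\ca{F}$ and $\Gamma\hookrightarrow\Delta$ over its value group, then the inclusion of $\ca{F}$ extends to a valued-field embedding $\ca{K}\hookrightarrow\ca{L}$ inducing the prescribed maps on residue field and value group. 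Granting this lemma, the three statements follow formally. For $\ca{K}\equiv\ca{L}$ one passes to highly saturated elementary extensions $\ca{K}^{*},\ca{L}^{*}$, uses $\bk\equiv\bl$ and $\Gamma\equiv\Delta$ together with saturation to identify residue fields and value groups, and then applies the lemma in both directions to build an isomorphism $\ca{K}^{*}\isom\ca{L}^{*}$. For $\ca{K}\prec\ca{L}$ one uses $\bk\prec\bl$ and $\Gamma\prec\Delta$ to run the lemma repeatedly and produce a sandwich of elementary extensions whose two limits coincide. The $\ec$ version requires only that $\ca{L}$ (together with its existential data) embed into a sufficiently saturated elementary extension of $\ca{K}$ over $\ca{K}$, which is again a direct application of the lemma.

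The embedding lemma itself I would prove by a Zorn's-lemma argument: take a maximal extension of the embedding to an intermediate valued subfield $\ca{K}'$ with $\ca{F}\subseteq\ca{K}'\subseteq\ca{K}$, and show $\ca{K}'=\ca{K}$ by extending over an arbitrary $a\in K\setminus K'$. There are three cases according to how $\ca{K}'(a)$ sits over $\ca{K}'$. If $v(a)\notin vK'$ the value group grows, and one uses the given embedding $\Gamma\hookrightarrow\Delta$ together with saturation of $\ca{L}$ to find an element of the correct value; if the residue $\bar a$ is new, one lifts along $\bk\hookrightarrow\bl$ using henselianity and saturation. Both cases are routine because such extensions are rigidly determined by the value, respectively the residue. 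The essential case is the third, in which $\ca{K}'(a)\mid\ca{K}'$ is immediate.

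The main obstacle is exactly this immediate case, and it is here that perfectness, equal characteristic, and algebraic completeness do all the work. Since $\ca{K}$ is algebraically maximal (hence admits no proper immediate algebraic extension), an immediate $a$ must be transcendental over $K'$ and realizes a pseudo-Cauchy sequence from $K'$ of transcendental type in the sense of Kaplansky; the $|K|^{+}$-saturation of $\ca{L}$ then produces an element $b\in L$ realizing the image sequence, and the embedding extends by $a\mapsto b$. The delicate points are to exclude pseudo-Cauchy sequences of algebraic type without a limit and, more seriously, to rule out defect: in positive residue characteristic without perfectness and defectlessness one encounters Artin--Schreier and Kummer-type immediate extensions invisible to both residue field and value group, which is precisely why the theorem needs the tameness hypotheses and why it fails for non-perfect fields, as witnessed by Theorem~\ref{counter}. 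Thus the heart of the proof is the classification of immediate extensions of tame fields, after which the value-group and residue-field steps, together with saturation, assemble the full embedding.
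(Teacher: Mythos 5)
The paper itself does not prove Theorem~\ref{AKE}: it is quoted from \cite{kuhlmann2}, so your proposal can only be measured against the argument given there --- and in outline you do follow it (a relative embedding lemma into a saturated tame field, fed into standard saturation machinery, with Kaplansky's theory of pseudo-Cauchy sequences carrying the immediate case). The execution, however, has a genuine gap at the central step. In the immediate case you argue that, because $\ca{K}$ is algebraically maximal, the new element $a$ ``must be transcendental over $K'$ and realizes a pseudo-Cauchy sequence of transcendental type.'' This is a non sequitur: algebraic maximality of $K$ is not inherited by an arbitrary intermediate field $K'$. Concretely, if $K'=\mathbb{Q}(t)$ with the $t$-adic valuation arises as an intermediate stage, then a square root $a\in K$ of $1+t$ (which exists, since $K$ is henselian and hence contains a henselization of $K'$) is algebraic over $K'$ and generates an immediate extension of $K'$; and even for transcendental $a$, a pseudo-Cauchy sequence from $K'$ without limit in $K'$ can be of algebraic type, in which case $K'(a)$ is not determined by the sequence and $a\mapsto b$ need not preserve the valuation. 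The repair is to maintain as an invariant of the Zorn argument that $K'$ is henselian and relatively algebraically closed in $K$ (so that, by the structure theory of tame fields, $K'$ is itself algebraically maximal), and to add a case in which the embedding is extended across \emph{algebraic} extensions; that step, which rests on the ramification theory of tame extensions (perfect, equal characteristic, defectless), is exactly where the real work in \cite{kuhlmann2} lies, and it is missing from your case division.

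The ``$\ec$'' clause also fails as you have set it up. There $\ca{L}$ is an \emph{arbitrary} valued field extension of $\ca{K}$ --- the theorem deliberately imposes its hypotheses only on $\ca{K}$ --- so $\ca{L}$ need not be perfect, algebraically complete, or of equal characteristic, and your embedding lemma, whose source field must have these properties, does not apply to it; calling this case ``again a direct application of the lemma'' is therefore unjustified. One needs a preliminary reduction: since existential sentences with parameters from $K$ persist from $L$ to any extension $L'$, one may replace $\ca{L}$ by a suitable tame extension (for instance a maximal immediate extension of its perfect hull), but then one must prove that the side hypotheses $\Gamma\ec vL'$ and $\bk\ec L'v$ survive this replacement; these auxiliary lemmas on perfect and divisible hulls are part of the proof in \cite{kuhlmann2} and are absent from your sketch. (A smaller point of the same kind: in the $\equiv$ statement, $\bk\equiv\bl$ and $\Gamma\equiv\Delta$ do not let you simply ``identify'' residue fields and value groups inside saturated elementary extensions; one needs a Keisler--Shelah or special-model argument, or an elementary-chain construction, though this part is standard.)
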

Here, ``$\equiv$'' denotes ``elementarily equivalent, ``$\prec$''
denotes ``elementary extension'' and ``$\ec$'' denotes ``existentially
closed in''.

If the residue field has characteristic 0, then ``perfect algebraically
complete'' is equivalent to ``henselian''; this is a consequence of the
Lemma of Ostrowski. As a corollary to Theorems~\ref{zgroup}
and~\ref{AKE}, we obtain:

\begin{theorem}                             \label{zge}
Let $\ca{K}=(K, \Gamma, \bk; v)$ be a henselian valued field with
$\Gamma$ a $\mathbb{Z}$-group and $\cha{\bk}= \cha{K}=0$. Then $\ca{K}$
is extremal.
\end{theorem}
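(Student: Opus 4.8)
The plan is to deduce this from Theorem~\ref{zgroup} by means of the Ax--Kochen--Ershov transfer principle of Theorem~\ref{AKE}, using that extremality is a first-order condition. The key idea is to compare $\ca{K}$ with the Laurent series field $\bk((t))$, carrying its canonical valuation $v_t$, whose value group is literally $\Z$ and whose residue field is again $\bk$.

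First I would upgrade the hypotheses on $\ca{K}$ so as to apply Theorem~\ref{AKE}. Since $\cha{\bk}=0$, the remark preceding the statement (a consequence of the Lemma of Ostrowski) shows that henselian is equivalent to perfect and algebraically complete; thus $\ca{K}$ is perfect and algebraically complete with $\cha{\bk}=\cha{K}=0$. Next I would introduce the comparison field $\ca{L}=(\bk((t)),\Z,\bk;v_t)$. Being a power series field it is maximal, hence algebraically complete, and since it is equicharacteristic $0$ it is perfect with $\cha{\bk}=\cha{\bk((t))}=0$; so $\ca{L}$ also satisfies the hypotheses of Theorem~\ref{AKE}. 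Moreover, $\ca{L}$ is extremal by Theorem~\ref{zgroup}, as it is algebraically complete with value group $\isom\Z$.

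It then remains to transfer extremality from $\ca{L}$ to $\ca{K}$. Applying Theorem~\ref{AKE} to $\ca{K}$ and $\ca{L}$, their residue fields coincide (both equal $\bk$) and are in particular elementarily equivalent, while their value groups are elementarily equivalent because ``$\Gamma$ is a $\Z$-group'' means exactly $\Gamma\equiv\Z$, and the value group of $\ca{L}$ is $\Z$. Hence $\ca{K}\equiv\ca{L}$. Since extremality is a first-order condition and $\ca{L}$ is extremal, $\ca{K}$ is extremal as well. I do not anticipate a genuine obstacle here: the argument is a direct orchestration of the cited results, and the only point demanding care is checking that both fields meet the ``perfect, algebraically complete, $\cha{\bk}=\cha{K}$'' hypotheses of Theorem~\ref{AKE} --- for $\ca{K}$ this rests on the Ostrowski remark and for $\ca{L}$ on the maximality of power series fields.
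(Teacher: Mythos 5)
Your proposal is correct and follows exactly the paper's intended route: the paper derives Theorem~\ref{zge} as an immediate corollary of Theorems~\ref{zgroup} and~\ref{AKE} together with the Ostrowski remark, which is precisely your argument. You have merely made explicit the details the paper leaves implicit, namely the choice of $\bk((t))$ as the comparison field and the verification that both fields satisfy the hypotheses of Theorem~\ref{AKE}.
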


The following theorem includes also the case of fields of positive
characteristic:
\begin{theorem}                               \label{divisible}
Let $\ca{K}=(K, \Gamma, \bk;v)$ be a perfect algebraically complete
valued field with divisible value group $\Gamma$ and large residue field
$\bk$ with $\cha{\bk}=\cha{K}$. Then $\ca{K}$ is extremal.
\end{theorem}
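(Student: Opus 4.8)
The plan is to reduce, via the Ax--Kochen--Ershov principle, the extremality of the given field to that of a single convenient model, and then to prove extremality for that model using the largeness of $\bk$.

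First I would observe that extremality is expressible by a scheme of \emph{parameter-free} first-order sentences. For fixed $d$ and $n$, universally quantifying over the coefficient tuples $\bar c=(c_\alpha)_{|\alpha|\le d}$ turns the requirement into
\[\forall\bar c\,\exists\bar a\in\ca{O}_v^{\,n}\,\forall\bar b\in\ca{O}_v^{\,n}:\ v(F_{\bar c}(\bar b))\le v(F_{\bar c}(\bar a))\>,\]
where $F_{\bar c}=\sum_\alpha c_\alpha\bar X^\alpha$ and the comparison takes place in $\gi$ (so that a zero of $F_{\bar c}$ realizes the maximal value $\infty$). Since the $\ca{O}_v$-restricted quantifiers and the value comparison are first-order in the three-sorted language, each such sentence is parameter-free; hence extremality is preserved under elementary equivalence.

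Next I would produce a concrete companion. Since $\Gamma$ is a nontrivial divisible ordered abelian group and the theory of such groups is complete, $\Gamma\equiv\mathbb{Q}$, while $\bk\equiv\bk$ trivially. Take $\ca{M}=(\bk((t^{\mathbb{Q}})),\mathbb{Q},\bk;v_t)$, the Hahn series field over $\bk$ with value group $\mathbb{Q}$. It is maximal, hence algebraically complete; it is perfect, as $\bk$ is perfect and $\mathbb{Q}$ is $p$-divisible; and $\cha\bk=\cha(\bk((t^{\mathbb{Q}})))$. Thus both $\ca{K}$ and $\ca{M}$ satisfy the hypotheses of the first part of Theorem~\ref{AKE}, with elementarily equivalent residue fields and value groups, so $\ca{K}\equiv\ca{M}$. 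By the previous paragraph it therefore suffices to prove that $\ca{M}$ is extremal.

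The remaining, and main, task is to show $\ca{M}$ extremal, and this is where largeness enters and where I expect the real difficulty. Fix $F$ over $\ca{O}_{v_t}$ and let $\delta=\sup\{v_t(F(\bar a)):\bar a\in\ca{O}_{v_t}^{\,n}\}\in\gi$. Using that the value group $\mathbb{Q}$ is divisible, I would normalize and analyze $v_t(F(\bar a))$ through its leading (tropical) behaviour: for generic leading residues the value equals $\min_\alpha(v_t(c_\alpha)+\langle\alpha,\bar u\rangle)$ with $\bar u=(v_t a_1,\dots,v_t a_n)$, and the value can be pushed strictly higher only by cancelling the minimal terms, which amounts to solving a system over the residue field $\bk$. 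The point is that $\bk$ is large and perfect and is the residue field of a valuation trivial on it, so by Theorem~\ref{lec} it is existentially closed in each relevant residue extension; together with Lemma~\ref{emb} and Hensel's Lemma this should guarantee that whenever a cancellation is possible it can be realized by genuine elements of $\ca{O}_{v_t}$, and that the associated residue curves carry the points needed to continue. The crux is to organize this into a terminating procedure showing that the supremum $\delta$ is actually attained (or that $F$ has a zero), rather than merely approached; here largeness plays exactly the role that discreteness of the value group plays in the $\mathbb{Z}$-group case of Theorem~\ref{zgroup}. Once $\ca{M}$ is shown extremal, the reduction above yields that $\ca{K}$ is extremal.
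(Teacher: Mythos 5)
Your first step --- extremality is a parameter-free first-order scheme, so by Theorem~\ref{AKE} it suffices to prove extremality for one concrete model with the right residue field and value group --- is sound and matches the opening move of the paper's proof. But your choice of model is already a substantive loss: you reduce to value group $\mathbb{Q}$, whereas the paper reduces to value group $\mathbb{R}$, and this is not cosmetic. The whole difficulty of the theorem is to show that a bounded set of attained values has a \emph{maximum}; over $\mathbb{R}$ the supremum at least exists inside the value group (Dedekind completeness), and the proof can then concentrate on showing that this supremum is attained. Over $\mathbb{Q}$ you would additionally have to rule out that the value set accumulates at an irrational cut, a problem your sketch never addresses and which the $\ec$-transfer argument cannot handle, since an irrational cut is not a parameter available in the structure.

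The decisive gap, however, is that the entire second half of your proposal --- proving the chosen model extremal --- is only an outline, and you yourself flag ``the crux'' (a terminating cancellation procedure showing the supremum is attained or a zero exists) as unresolved. That crux \emph{is} the theorem, and there is concrete evidence that the naive tropical/cancellation induction cannot be made to terminate on its own: in the example proving Theorem~\ref{counter}, exactly such a cancellation process (for $X_0^p-X_0+tX_1^p-x$) runs forever, producing values cofinal in a bounded set with no maximum; so termination must exploit perfectness and largeness in an essential, non-local way. The paper's mechanism is entirely different and genuinely model-theoretic: take a $|K|^+$-saturated elementary extension $\ca{K}^*$ and witnesses $a_1^*,\dots,a_n^*\in\ca{O}_{v^*}$ whose value dominates the whole value set over $\ca{K}$; pass to a suitable coarsening $w$ of $v^*$ (trivial on $K$ in the cofinal case, or collapsing the largest convex subgroup $\Gamma_0$ of $\Gamma^*$ with $\Gamma_0\cap\Gamma=\{0\}$ in the bounded case, so that $wF(\bar a^*)$ equals the real supremum $r$); and then pull the witness back to $\ca{O}_v$ using the existential-closedness part of Theorem~\ref{AKE}, i.e.\ $\ca{K}\ec(K^*,\Gamma^*/\Gamma_0,K^*w;w)$. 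Largeness enters at precisely one point: verifying the residue-field hypothesis $\bk\ec K^*w$ of that transfer, via Lemma~\ref{emb} and Theorem~\ref{lec}. Your sketch cites Theorem~\ref{lec} and Hensel's Lemma but attaches them to an induction that is never constructed; without the saturation--coarsening--$\ec$ machinery (or a worked-out substitute), the proof is missing its core.
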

\begin{proof}
In view of Theorem~\ref{AKE}, we only have to prove our theorem in the
case where $\Gamma$ is the ordered additive group of the real numbers.
Take any polynomial $F\in K[X_1,\ldots,X_n]$. Take $\ca{K}^*= (K^*,
\Gamma^*, \bk^*;v^*)$ to be a $|K|^+$-saturated elementary extension of
$\ca{K}$. Then $\Gamma^*$ is an elementary extension of $\Gamma$ and
$\bk^*$ is an elementary extension of $\bk$. Now we distinguish two
cases.

\par\smallskip\noindent
\underline{Case 1}: \ $\{v(F(a_1,\dots,a_n)): a_1,\dots a_n \in
\ca{O}_v\} \setminus \{\infty\}$ is cofinal in $\Gamma$. We wish to show
that $F$ has a zero in $\ca{O}_v^n$.

By our choice of $\ca{K}^*$ there are $a_1^*,\dots,a_n^*\in
\ca{O}_{v^*}$ such that $v^*(F(a_1^*,\dots,a_n^*))>\Gamma$.
Hence there exists a valuation $w$ on $\ca{K}^*$ that is coarser than
$v^*$, satisfies $wF(a_1^*,\dots,a_n^*)>0$, but is trivial on $K$. So we
can consider $K$ as a subfield of the residue field $K^*w$ of $K^*$
under $w$. We write $v^*=w\circ\bar{w}$ where $\bar{w}$ is the valuation
induced by $v^*$ on $K^*w$. Note that $a_1^*, \dots,a_n^*\in\ca{O}_{w}$.
We obtain that $0=F(a_1^*,\dots, a_n^*)w= F(a_1^*w,\dots,a_n^*w)$ and
that $a_1^*w,\dots,a_n^*w\in \ca{O}_{\bar{w}}$. Denote the value group
of $\bar{w}$ on $K^*w$ by $\Gamma'$. Since $\Gamma$ is
divisible, $\Gamma\ec\Gamma'$. Further, $\bk^*$ is equal to the residue
field of $K^*w$ under $\bar w$, and $\bk\ec\bk^*$. Hence by
Theorem~\ref{AKE}, $\ca{K}\ec (K^*w,\Gamma',\bk^*;\bar w)$.
Since $F$ has a zero in the valuation ring of the latter field, this
implies that there are $a_1,\dots,a_n\in \ca{O}_v$ such that
$F(a_1,\dots,a_n) =0$.

\par\smallskip\noindent
\underline{Case 2}: \ $\{v(F(a_1,\dots,a_n)): a_1,\dots a_n \in
\ca{O}_v\}$ is not cofinal in $\Gamma=\mathbb{R}$. Then there is some
real number $r$ which is the supremum of this set. We wish to show that
it is a member of the set.

By our choice of $\ca{K}^*$ there are $a_1^*,\dots,a_n^*\in
\ca{O}_{v^*}$ such that $\delta^*:=v^*(F(a_1^*,\dots,a_n^*))\geq
v(F(a_1,\dots,a_n))$ for all $a_1,\dots,a_n\in \ca{O}_v$. On the other
hand, $\delta^*\leq r$ in $\Gamma^*$. So $0\leq r-\delta^*<s$ for all
positive $s\in \mathbb{R}=\Gamma$. We take $\Gamma_0$ to be the largest
convex
subgroup of $\Gamma^*$ such that $\Gamma_0\cap\Gamma=\{0\}$. Then we
take $w$ to be the coarsening of $v^*$ with respect to $\Gamma_0$, that
is, $\ca{O}_{w}$ is generated over $\ca{O}_{v}$ by all elements whose
values under $v$ lie in $\Gamma_0$, and the value group of $w$ on $K^*$
is $\Gamma^*/\Gamma_0$. Again, we write $v^*=w\circ\bar{w}$ where
$\bar{w}$ is the valuation induced by $v^*$ on the residue field $K^*w$.
We observe that $v=w$ on $K$ (after identification of equivalent
valuations). Further, $w(F(a_1^*,\dots,a_n^*))=r$.

Now we have that $(K^*,\Gamma^*/\Gamma_0,K^*w; w)$ is an extension of
$\ca{K}$. We have canonical embeddings of $\bk$ in $K^*w$ and of
$\Gamma$ in $\Gamma^*/\Gamma_0$. Since $\Gamma$ is divisible, $\Gamma\ec
\Gamma^*/\Gamma_0$. The residue field $K^*w$ has a valuation $\bar w$
with residue field $\bk^*$. Being an elementary extension of $K$, also
$K^*$ is perfect. It follows that its residue fields $K^*w$ and
$\bk^*$ are perfect. Since every algebraically complete valued field is
henselian, Lemma~\ref{ww} shows that $K^*w$ is henselian under $\bar w$.
By Lemma~\ref{emb}, the canonical embedding of $\bk$ in $K^*w$ can be
extended to an embedding of $\bk^*$ in $K^*w$. Via this embedding we may
assume that $\bk^*$ is a subfield of $K^*w$. Since $\bk^*$ is an
elementary extension of $\bk$, it is again a large field. Thus by
Theorem~\ref{lec}, $\bk^*\ec K^*w$ and consequently, $\bk\ec K^*w$. It
now follows from Theorem~\ref{AKE} that $\ca{K}\ec
(K^*,\Gamma^*/\Gamma_0, K^*w; w)$. Hence there are $a_1,\dots,a_n\in
\ca{O}_v$ such that $vF(a_1,\dots,a_n) =r$.
\end{proof}

\par\smallskip
Theorem~\ref{gen} now follows from Theorems~\ref{zge},
\ref{divisible}, \ref{ndnzne}, \ref{dnlne} and~\ref{eac}.

\par\bigskip
We will now give an example that will prove
Theorem~\ref{counter}. Its construction is given in the paper
\cite{kuhlmann4}; we will use the same notation as in that paper. Taking
the valued field $(K,v)$ appearing in the construction to be
$(\mathbb{F}_p((t),v_t)$, where $\mathbb{F}_p$ is the field with $p$
elements and $v_t$ is the canonical $t$-adic valuation on
$\mathbb{F}_p((t))$, we obtain a valued field extension $(L,v)$ of
$(\mathbb{F}_p((t)),v_t)$ with the following properties:
\par\noindent
a) $L|\mathbb{F}_p((t))$ is a regular extension of transcendence degree
1,
\par\noindent
b) $(L,v)$ is algebraically complete,
\par\noindent
c) the value group $vL$ is a $\mathbb{Z}$-group with smallest positive
element $v(t)$,
\par\noindent
d) the residue field $Lv$ is equal to $\mathbb{F}_p\,$,
\par\noindent
e) $v=w\circ v_t$ for the finest valuation $w$ that is coarser than $v$,
\par\noindent
f) the value group $wL=\mathbb{Q}$ is divisible and by Lemma~\ref{ww},
$(L,w)$ is algebraically complete,
\par\noindent
g) the residue field $Lw=\mathbb{F}_p((t))$ is a large field as it
carries a henselian valuation,
\par\noindent
h) for the polynomial $G(X_0,X_1)=X_0^p-X_0 +tX_1^p-x$ (where $x=s^{-1}$
in the notation of \cite{kuhlmann4}), $w(G(x_0,x_1))< 0$ for all
$x_0,x_1\in L$ (because it is shown in \cite{kuhlmann4} that if
$w(G(x_0,x_1))\geq 0$, then $x_1$ must be transcendental over $L$).

The equality $Lw=\mathbb{F}_p((t))$ holds because in the construction,
$(L,w)$ is an immediate extension of a field $(L_2,w)$ with residue
field $L_2w=K=\mathbb{F}_p((t))$.

Using the notation of \cite{kuhlmann4}, in particular the recursive
definition
\[\xi_1\>=\>x^{1/p} \quad\mbox{ and }\quad \xi_{j+1}\>=\>
(\xi_j-c_1s^{-p/q_j})^{1/p}\]
with $c_1=t$, and setting
\[a_k\>=\>\sum_{j=1}^{k}\xi_j\quad\mbox{ and }\quad
b_k\>=\>\sum_{j=1}^{k-1} s^{-1/q_j}\;,\]
we compute:
\begin{eqnarray*}
x-(a_k^p-a_k)-c_1 b_k^p & = & x-\sum_{j=1}^{k}(\xi_j^p-\xi_j)
-c_1 \sum_{j=1}^{k-1} s^{-p/q_j}\\
 & = & x-\xi_1^p -\sum_{j=1}^{k-1}(\xi_{j+1}^p-\xi_j) +\xi_k
-c_1 \sum_{j=1}^{k-1} s^{-p/q_j}\\
 & = & \sum_{j=1}^{k-1} c_1 s^{-p/q_j} +\xi_k
-\sum_{j=1}^{k-1} c_1 s^{-p/q_j}\\
 & = & \xi_k\;.
\end{eqnarray*}
Hence,
\[w(G(a_k,b_k))\>=\>w\xi_k\>=\>-\frac{1}{p^k}ws\><\>0\;.\]
This shows that $\{w(G(a_k,b_k))\mid k\in \mathbb{N}\}$ is cofinal in
the negative part $wL^{<0}$ of $wL$.

By the definition of $a_k$, we have $w(a_k)=w(\xi_1)=\frac{1}{p}w(x)=
-\frac{1}{p}w(s)<0$ for all $k$. Similarly, $w(b_k)=w(s^{-1/q_1})=
-\frac{1}{q_1}w(s)<0$ for all $k$. Consequently, $w(sa_k)>0$,
$w(sb_k)>0$ and hence $sa_k,sb_k\in \ca{O}_v\subset\ca{O}_w$. Now set
\[F(X,Y)\>=\>G(s^{-1}X,s^{-1}Y)=s^{-p}X^p-s^{-1}X
+ts^{-p}Y^p-s^{-1}\;.\]
Then $w(F(sa_k,sb_k))=w(G(a_k,b_k))$, but we still have that
$w(F(a,b))< 0$ for all $a,b\in L$. So we see that $\{w(F(a,b))\mid
a,b\in \ca{O}_w\}$ is a cofinal subset of $wL^{<0}$ and thus has no
maximal element. This proves that $(L,w)$ is not extremal.

Observe that $w(F(a,b))<0$ is equivalent to $v(F(a,b))<
v_t\mathbb{F}_p((t))$. As the set $\{w(F(a,b))\mid a,b\in \ca{O}_w\}$ is
a cofinal subset of $wL^{<0}$, the set $\{v(F(a,b))\mid a,b\in
\ca{O}_v\}$ is a cofinal subset of the set of all values
$<v_t\mathbb{F}_p((t))$ in $vL$ and thus has no maximal element. This
proves that $(L,v)$ is not extremal.

\end{section}

\begin{section}{Some further results}

An Ax-Kochen-Ershov Principle as in Theorem~\ref{AKE} also holds for
formally $\wp$-adic fields (see \cite{preroq}) and, more generally, for
finitely ramified fields (see \cite{ershov}, \cite{ziegler}). A formally
$\wp$-adic field is $\wp$-adically closed if and only if it is henselian
and its value group is a $\mathbb{Z}$-group. Formally $\wp$-adic and
finitely ramified fields are algebraically complete as soon as they are
henselian. Hence, we obtain from Theorems~\ref{zgroup} and~\ref{char0}
via the Ax-Kochen-Ershov Principle:

\begin{theorem}                                  \label{fr}
A formally $\wp$-adic field is extremal if and only if it is
$\wp$-adically closed. A finitely ramified field is extremal if and
only if it is henselian and its value group is a $\mathbb{Z}$-group.
\end{theorem}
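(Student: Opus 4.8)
The plan is to prove both equivalences by combining the characteristic-free necessary conditions established in Section~\ref{sectprop} with the specialized Ax--Kochen--Ershov principle for finitely ramified fields and with Theorem~\ref{zgroup}; this mirrors, in residue characteristic $p$, case (i) of Theorem~\ref{char0}. The crucial observation is that although formally $\wp$-adic fields are of mixed characteristic and hence lie outside the scope of the general equal-characteristic machinery (Theorem~\ref{divisible} and the transfer of Theorem~\ref{AKE}), their value groups are constrained enough that the divisible alternative of Theorem~\ref{ndnzne} is excluded, and the $\Z$-group alternative can be reduced to the literal case of a value group isomorphic to $\Z$ by the appropriate transfer principle.

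For the necessity direction, suppose $\ca{K}$ is extremal. By Proposition~\ref{eam} it is henselian, and by Theorem~\ref{ndnzne} its value group $\Gamma$ is either divisible or a $\Z$-group. In a finitely ramified field we are in mixed characteristic, so $0<vp<\infty$ and only finitely many values lie between $0$ and $vp$; in particular a least positive value exists, which is incompatible with divisibility. Hence $\Gamma$ is a $\Z$-group. For a formally $\wp$-adic field this says precisely that $\ca{K}$ is henselian with $\Z$-group value group, which by the definition recalled above means that $\ca{K}$ is $\wp$-adically closed; for a general finitely ramified field it yields the asserted condition directly.

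For the sufficiency direction, suppose $\ca{K}$ is henselian with $\Z$-group value group (equivalently, $\wp$-adically closed in the formally $\wp$-adic case). Since henselian finitely ramified fields are algebraically complete, $\ca{K}$ is algebraically complete. A $\Z$-group is elementarily equivalent to $\Z$, so I would choose a henselian finitely ramified field $\ca{L}$ with elementarily equivalent residue field and matching ramification invariants but with value group isomorphic to $\Z$ --- for instance a suitable finite extension of $\mathbb{Q}_p$ in the classical case. Such an $\ca{L}$ is algebraically complete with value group isomorphic to $\Z$, hence extremal by Theorem~\ref{zgroup}. The Ax--Kochen--Ershov principle for finitely ramified fields then gives $\ca{K}\equiv\ca{L}$, and since extremality is a first-order property of valued fields, $\ca{K}$ is extremal as well.

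The main obstacle is the sufficiency direction, and more precisely the mixed-characteristic gap: because the equal-characteristic results are unavailable here, everything rests on having a genuinely first-order formulation of extremality together with the separate finitely ramified AKE principle. The step requiring the most care is verifying that a comparison field $\ca{L}$ with value group isomorphic to $\Z$ and the prescribed residue field and ramification actually exists and satisfies the hypotheses of Theorem~\ref{zgroup}; by contrast, the elimination of the divisible case in the necessity direction is routine once finite ramification is invoked.
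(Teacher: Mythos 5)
Your proposal is correct and follows essentially the same route as the paper: the paper's own (very terse) proof is exactly the combination of the Ax--Kochen--Ershov principle for formally $\wp$-adic and finitely ramified fields, the fact that such fields are algebraically complete once henselian, Theorem~\ref{zgroup} applied to a comparison model with value group $\mathbb{Z}$, the first-order nature of extremality, and the general necessity results (Proposition~\ref{eam}, Theorem~\ref{ndnzne}) with divisibility excluded by finite ramification. The only point where you are slightly more cautious than the paper --- the existence of a comparison field with value group literally $\mathbb{Z}$ and arbitrary residue field of characteristic $p$ --- is glossed over by the paper as well, and can be settled, e.g., by passing to the residue field of the coarsening of $v$ associated with the convex subgroup generated by the smallest positive value.
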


\par\smallskip
If $\ca{K}=(K,\Gamma,\bk;v)$ is a valued field such that $v=w\circ\bar
w$, then by Lemma~\ref{ww}, $v$ is henselian if and only if $w$ and
$\bar w$ are. The same holds for ``algebraically complete'' in the place
of ``henselian''. The corresponding assertion for ``extremal'' is not
entirely known. We leave the easy proof of the following result to the
reader:
\begin{lemma}
If $\ca{K}$ is extremal, then $K$ is also
extremal with respect to every coarsening $w$ of $v$.
\end{lemma}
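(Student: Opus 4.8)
The plan is to reduce the $w$-problem to the $v$-problem and, more efficiently, to bypass the reduction by checking that $(K,w)$ already satisfies the hypotheses of Theorem~\ref{divisible}. Let $\Delta\subseteq\Gamma$ be the convex subgroup corresponding to the coarsening, so that $v=w\circ\bar w$ and $wK=\Gamma/\Delta$; if $\Delta=0$ then $w=v$ and there is nothing to prove, so I assume $\Delta\neq 0$. First I would record the structural facts now available. Since $\ca{K}$ is extremal, Theorem~\ref{ndnzne} gives that $\Gamma$ is a $\mathbb{Z}$-group or divisible, and hence regular; by Theorem~\ref{reg2} the quotient $wK=\Gamma/\Delta$ is then divisible. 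By Theorem~\ref{eac} the field $\ca{K}$ is algebraically complete, so Lemma~\ref{ww} makes both $w$ and $\bar w$ algebraically complete; in particular $\bar w$ is henselian on $Kw$, and a field carrying a henselian valuation is large, so the residue field $Kw$ is large.

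Thus $(K,w)$ is an algebraically complete valued field with divisible value group and large residue field, and I would finish by invoking Theorem~\ref{divisible} to conclude that $(K,w)$ is extremal. The main obstacle is that Theorem~\ref{divisible} carries the standing hypotheses that $K$ be perfect with $\cha{Kw}=\cha K$: these hold automatically when $\cha{\bk}=0$ and, more generally, in the equal-characteristic perfect case, but not a priori in mixed characteristic or for non-perfect $K$.

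To treat those remaining cases I would fall back on an elementary scaling argument. After multiplying $F$ by a suitable constant one may assume $F\in\ca{O}_v[\overline X]$, which does not affect the existence of a maximum. The key observation is that $\ca{O}_w=\bigcup_s s^{-1}\ca{O}_v$, the union being taken over $s\in\ca{O}_v$ with $v(s)\in\Delta$, and that this is a \emph{directed} union (larger $v(s)$ gives a larger sheet). On each sheet the substitution $\overline a=s^{-1}\overline u$ with $\overline u\in\ca{O}_v^{\,n}$ gives $wF(\overline a)=w\bigl(s^{d}F(s^{-1}\overline u)\bigr)$ with $d=\deg F$, where $s^{d}F(s^{-1}\overline U)\in\ca{O}_v[\overline U]$ and $w(s)=0$; applying $v$-extremality to this polynomial produces a maximum of $wF$ over the sheet, and these sheet-maxima are non-decreasing along the directed union. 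The crux is then to show that their supremum is attained, equivalently that the optimal scaling $s$ can be kept inside $\Delta$ rather than escaping into $\fr{m}_w$; I expect this attainment step to be the real work, and I would establish it by showing that a strictly increasing cofinal family of sheet-maxima could be converted into a single polynomial over $K$ witnessing the failure of $v$-extremality.
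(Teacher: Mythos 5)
You are working against a statement for which the paper itself supplies no argument at all (``we leave the easy proof to the reader''), so there is no proof to match; judged on its own terms, your proposal has a genuine gap. Your first route --- checking that $(K,w)$ satisfies the hypotheses of Theorem~\ref{divisible}, using Theorem~\ref{eac} and Lemma~\ref{ww} for algebraic completeness, Theorem~\ref{ndnzne} with Theorem~\ref{reg2} for divisibility of $\Gamma/\Delta$, and the nontrivial henselian valuation $\bar w$ on $Kw$ for largeness --- is correct as far as it goes, but the restriction to perfect $K$ with $\cha(Kw)=\cha K$ is not a removable technicality. The paper's Theorem~\ref{counter} exhibits an algebraically complete valued field $(L,w)$ with divisible value group and large (non-perfect) residue field that is \emph{not} extremal; so the three structural facts you collect simply do not imply extremality, and in the non-perfect and residue-characteristic-$p$ mixed cases your fallback argument must carry the entire weight of the proof.

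But the fallback is precisely where the proposal stops being a proof. The decomposition $\ca{O}_w=\bigcup_s s^{-1}\ca{O}_v$, the sheetwise maxima obtained from $v$-extremality of $s^dF(s^{-1}\overline{U})$, and their monotonicity are all fine; the attainment of the supremum of the sheet maxima --- which you yourself call ``the real work'' --- is only announced, and the announced strategy is doubtful. To convert a strictly increasing cofinal family of sheet maxima into one polynomial violating $v$-extremality, you must constrain a scaling variable $S$ to satisfy $v(S)\in\Delta$, and membership in a convex subgroup is not expressible by polynomial conditions on $\ca{O}_v$-points: a polynomial of bounded degree $d$ can only compare $Nv(S)$, $N\le d$, against the finitely many values of its coefficients, so it cannot separate $v(S)\in\Delta$ from values just beyond $\Delta$; consequently any candidate polynomial is also evaluated at scalings whose sheets are not contained in $\ca{O}_w^n$, and the desired contradiction with $v$-extremality need not materialize. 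The natural attempt of letting $S$ be a genuine variable --- applying extremality to the homogenization $G(\overline{U},S)=S^dF(\overline{U}/S)$ --- fails outright, since $G$ is homogeneous of positive degree, so $G(\overline{0},0)=0$ and the maximum of $vG$ on $\ca{O}_v^{n+1}$ is trivially $\infty$, while applying extremality only sheetwise gives exactly the upper bounds you already have, without attainment. So the crux of the lemma is untouched; in the imperfect and mixed-characteristic cases the proposal proves nothing beyond what Theorem~\ref{divisible} already gives.
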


From our characterization of extremal fields we obtain:

\begin{proposition}
Let $\ca{K}=(K,\Gamma,\bk;v)$ be a perfect algebraically complete valued
field such that $\cha{\bk}=\cha{K}$ and $v=w\circ\bar{w}$. Then $\ca{K}=
(K,\Gamma,\bk;v)$ is extremal if and only if $K$ is extremal with
respect to $w$ and the residue field of $K$ under $w$ is extremal
with respect to $\bar{w}$.
\end{proposition}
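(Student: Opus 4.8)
The plan is to use the characterization of extremal fields developed in the paper, namely Theorem~\ref{gen}, together with the behaviour of value groups and residue fields under decomposition of a valuation. First I would recall the standing hypothesis $v=w\circ\bar w$, and fix notation: write $\Gamma_w=wK$ for the value group of $w$, let $F=Kw$ be the residue field of $K$ under $w$, write $\Gamma_{\bar w}=\bar w F$ for the value group of $\bar w$ on $F$, and note that $\bk$ is the residue field of $F$ under $\bar w$. The fundamental exact sequence for a composition of valuations gives a short exact sequence of ordered groups
\[
0\longrightarrow \Gamma_{\bar w}\longrightarrow \Gamma \longrightarrow \Gamma_w \longrightarrow 0\>,
\]
in which $\Gamma_{\bar w}$ is (isomorphic to) a convex subgroup of $\Gamma$ and $\Gamma_w\isom\Gamma/\Gamma_{\bar w}$. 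By Lemma~\ref{ww}, since $\ca{K}$ is henselian (indeed algebraically complete), both $(K,w)$ and $(F,\bar w)$ are again algebraically complete, and perfectness is inherited by $F$ and $\bk$ because $K$ is perfect; the equal-characteristic hypothesis $\cha{\bk}=\cha{K}$ likewise passes to both factors. So each of the two factor valued fields again satisfies the hypotheses of Theorem~\ref{gen}, which is the point that makes the equivalence tractable.

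The forward direction is already supplied by the preceding Lemma, which states that if $\ca{K}$ is extremal then $K$ is extremal with respect to every coarsening $w$ of $v$. What remains is to show that the residue field $(F,\bar w)$ is extremal and, conversely, that extremality of the two factors forces extremality of $\ca{K}$. For the residue factor in the forward direction, I would combine Theorem~\ref{gen} applied to $\ca{K}$ with the exact sequence: extremality of $\ca{K}$ forces $\Gamma$ to be a $\Z$-group or divisible with $\bk$ large. If $\Gamma$ is divisible, then every convex subgroup and every quotient is divisible, so both $\Gamma_{\bar w}$ and $\Gamma_w$ are divisible; and $\bk$ large is exactly the residue-field condition needed for $(F,\bar w)$. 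If $\Gamma$ is a $\Z$-group, the only nontrivial convex subgroup situation is degenerate, and one checks that a decomposition with $\bar w$ nontrivial forces $\Gamma_{\bar w}\isom\Z$ with divisible quotient; in each case the factors land in case~(i) or case~(ii) of Theorem~\ref{gen}, hence are extremal by the converse half of that theorem (available since the factors are perfect, algebraically complete, equal characteristic).

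For the converse direction I would argue in the same spirit but running Theorem~\ref{gen} backwards. Assume $(K,w)$ and $(F,\bar w)$ are both extremal. By Theorem~\ref{gen} applied to each factor, $\Gamma_w$ is a $\Z$-group or divisible, and $\Gamma_{\bar w}$ is a $\Z$-group or divisible with the appropriate residue-field condition. The task is to reassemble these into the statement that $\Gamma$ is a $\Z$-group or divisible and, in the divisible case, that $\bk$ is large. The key group-theoretic step is that regularity is detected on convex subgroups and quotients: by Theorem~\ref{reg2}, $\Gamma$ is regular iff $\Gamma/\Delta$ is divisible for every nonzero convex subgroup $\Delta$, and the admissible combinations from the two factors are exactly those that make $\Gamma$ regular, hence (by Theorem~\ref{reg1}) a $\Z$-group or dense-and-divisible. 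I expect the main obstacle to be precisely this reassembly: one must rule out mixed cases — for instance $\Gamma_{\bar w}$ divisible but $\Gamma_w$ a $\Z$-group, which would produce a $\Gamma$ that is neither a $\Z$-group nor divisible — and show that such configurations cannot arise from two extremal factors, or else that they are already excluded by the hypothesis $v=w\circ\bar w$ together with the converse clause of Theorem~\ref{gen}. Once the value-group bookkeeping is settled and the largeness of $\bk$ is transported from the extremal residue factor $(F,\bar w)$ through Theorem~\ref{dnlne} and Theorem~\ref{divisible}, Theorem~\ref{gen} applied to $\ca{K}$ itself yields extremality, completing the equivalence.
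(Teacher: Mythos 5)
The paper itself gives no proof of this Proposition --- it is stated as an immediate consequence of the characterization (``From our characterization of extremal fields we obtain''), so the only thing to compare against is the intended strategy, which is exactly the one you adopt: decompose $\Gamma$ via the exact sequence $0\to\Gamma_{\bar w}\to\Gamma\to\Gamma_w\to 0$ with $\Gamma_{\bar w}=\bar w(Kw)$ convex and $\Gamma_w=wK\isom\Gamma/\Gamma_{\bar w}$, transfer ``perfect, algebraically complete, equal characteristic'' to both factors via Lemma~\ref{ww}, and apply Theorem~\ref{gen} three times. Your forward direction is essentially correct, with two small repairs: a nonzero convex subgroup of a $\mathbb{Z}$-group is again a $\mathbb{Z}$-group but need not be isomorphic to $\Z$ (so ``forces $\Gamma_{\bar w}\isom\Z$'' is too strong), and when $\Gamma_w$ is divisible the largeness of $Kw$ required by case~(ii) of Theorem~\ref{gen} comes from the fact that $Kw$ carries the nontrivial henselian valuation $\bar w$ (the paper records that such fields are large).

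The backward direction is where your proposal has a genuine gap, and it is a gap that cannot be filled: you defer the ``reassembly'' of $\Gamma$ and express the hope that the mixed configurations ``cannot arise from two extremal factors''. They can. Take $K=\mathbb{C}((s))((t))$, $w=v_t$, $Kw=\mathbb{C}((s))$, $\bar w=v_s$, and $v=w\circ\bar w$ the rank-two composite. All hypotheses of the Proposition hold: $K$ is perfect, $\cha{\bk}=\cha{K}=0$, and $(K,v)$ is algebraically complete by Lemma~\ref{ww}; moreover both $(K,w)$ and $(Kw,\bar w)$ are extremal by Theorem~\ref{zgroup}, being algebraically complete with value group $\isom\Z$. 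But $vK\isom\Z\oplus\Z$ ordered lexicographically is neither divisible nor a $\mathbb{Z}$-group: its quotient modulo the nonzero convex subgroup $\Delta=\{0\}\oplus\Z$ is $\Z$, which is not divisible, so $vK$ is not even regular by Theorem~\ref{reg2}, and hence $(K,v)$ is not extremal by Theorem~\ref{ndnzne}. (Concretely, for $F(x,y)=x^{8}+t(xy-t^{2})^{2}+t^{2}y^{8}$ one has $v\bigl(F(ts^{k},ts^{-k})\bigr)=8v(t)+8kv(s)$ for every $k\geq 0$, while the argument of Proposition~\ref{exnreg} shows $v(F(a,b))<9v(t)+\Delta$ for all $a,b\in\ca{O}_v$; so the value set has no maximum.) Thus the obstacle you identified is not something to be ruled out but a counterexample: the ``if'' direction of the Proposition, as stated in this version of the paper, is false, and no proof of it exists to be completed. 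What your reassembly argument does prove is the corrected statement: for $\bar w$ nontrivial, $\ca{K}$ is extremal if and only if $(Kw,\bar w)$ is extremal and $wK$ is divisible. Indeed, a convex extension with divisible quotient of a $\mathbb{Z}$-group (resp.\ of a divisible group) is a $\mathbb{Z}$-group (resp.\ divisible), largeness of $\bk$ travels with the extremal factor $(Kw,\bar w)$, and extremality of $(K,w)$ is then automatic from Theorem~\ref{divisible} since $Kw$ is large; conversely, extremality of $\ca{K}$ forces $wK=\Gamma/\Gamma_{\bar w}$ to be divisible by Proposition~\ref{exnreg}.
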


\end{section}

\bibliographystyle{plain}

\end{document}